\newtheorem{definition}{Definition}[section]
\newtheorem{proposition}{Proposition}[section]
\newtheorem{theorem}{Theorem}[section]
\newtheorem{lemma}[proposition]{Lemma}
\newtheorem{remark}{Remark}[section]
\newtheorem{corollary}[theorem]{Corollary}
\numberwithin{equation}{section}
\title{ \textbf{Ground State Solutions of the Complex Gross Pitaevskii Equation Associated to Exciton-Polariton Bose-Einstein Condensates}}
\author{Hichem Hajaiej$^{\text{(1)}}$ \ Slim Ibrahim$^{\text{(2)}}$  
 \& Nader Masmoudi$^{\text{(3)}}$}
\newcommand{\Addresses}{{
  \bigskip
  \footnotesize

  Hajaiej~H. \textsc{California State University, Los Angeles, 5151 University Drive, 90032, Los Angeles, CA}\par\nopagebreak
  \textit{E-mail address}, Hajaiej~H.: \texttt{hhajaie@calstatela.edu}

  \medskip

  Ibrahim~S. \textsc{Department of Mathematics and Statistics, University of Victoria, PO Box 3060 STN CSC, Victoria, BC, V8P 5C3, Canada}\par\nopagebreak
  \textit{E-mail address}, Ibrahim~S.: \texttt{ibrahims@uvic.ca}

%

  \medskip

  Masmoudi~N. \textsc{New York University, The Courant Institute for Mathematical Sciences}\par\nopagebreak
  \textit{E-mail address}, Masmoudi, N.: \texttt{masmoudi@courant.nyu.edu}

}}
\date{}
\begin{document}
\maketitle
\textbf{Abstract:} We investigate the existence of ground state solutions of a Gross-Pitaevskii equation modeling the dynamics of pumped Bose Einstein condensates (BEC). The main interest in such BEC comes from its important nature as macroscopic quantum system, constituting an excellent alternative to the classical condensates which are hard to realize because of the very low temperature required. Nevertheless, the Gross Pitaevskii equation governing the new condensates presents some mathematical challenges due to the presence of the pumping and damping terms. Following a self-contained approach, we prove the existence of ground state solutions of this equation under suitable assumptions: This is equivalent to say that condensation occurs in these situations. We also solve the Cauchy problem of the nonlinear Schr\"{o}dinger equation and prove some corresponding laws.

\section{Introduction}
The first realization of condensation has been obtained experimentally in a system consisting of about half million alkali atoms cooled down to nano-Kelvin  temperature. Thus, a considerable obstacle in the study of (BEC) is the very low temperature required to create the condensate. Completely aware that it is extremely important to explore what kind of condensates can undergo condensation at higher temperatures, huge efforts have been undertaken by scientists to overcome this difficulty right after the first experimental realization of the first  (BEC) in 1995. During the last years, a new kind of condensates has attracted the attention of many scientists. Very recently, it turned out that an excellent candidate is a system of exciton-polaritons, which are bosonic quasiparticles that exist inside semiconductor micro-cavities, consisting of a superposition of an exciton and a cavity photon. Above a threshold density, the polaritons macroscopically occupy the same quantum state, forming a condensate. The temperatures that are usually used to form exciton-polariton BECs are around T=10K, far higher than the nano-Kelvin temperatures required for atomic BECs. They are immensely promising in terms of new quantum technologies since quantum effects can appear on a macroscopic level, unlike most systems where quantum effects are rather easily destroyed by temperature and decoherence.  As Boson particles are composed of quantum well excitons and optical cavity photons, microcavity exciton-polaritons possess unique intrinsic features: reminiscent excitonic nature leads to important interaction dynamics among exciton-polaritons.  Polariton-polariton repulsive interactions are indeed crucial to stimulate scattering processes in order to relax into the ground state Bose-Einstein condensates (BECs).  Since the temperature of condensation is inversely proportional to the mass of the particles, the exciton-polariton systems afford relatively high temperatures of condensation. The first drawback of these new condensates is their very short lifetime (approximately 1 ps), inherited also from their photonic component, so that polariton thermalization could be problematic. In fact the polariton gas can become fully thermalized, as a result of strong polariton-polariton interaction caused by their excitonic component.  The second important inconvenient comes from the fact that the excitons disappear with the recombination of the electron-hole pairs through emission of photons. One way to overcome these problems is to introduce a polariton reservoir: polaritons are \lq\lq cooled\rq\rq and \lq\lq pumped\rq\rq from this reservoir into the condensate. At the same time, a low density level is kept in order to reduce the interactions between polaritons. Different mathematical models have been suggested for this new condensate. In this paper we consider the one proposed in \cite{KB}, called complex Gross-Pitaevski equation. For a more detailed account of these aspects, see \cite{SKMW} and references therein.

In \cite{SKMW}, the authors addressed the nature of radially symmetric standing wave-type solutions of  the following nonlinear Gross-Pitaevskii equation:
\begin{equation}\label{NLS}\tag{GPPD}
i\displaystyle{\frac{\partial \psi}{\partial t}}=(-\Delta+V(x)+|\psi|^{2})\psi+i(\sigma(x)-\alpha|\psi|^{2}) \psi,
\end{equation}
where $\psi=\psi(x,t)$ is a complex-valued function defined on $\mathbb{R}^{2}\times \mathbb{R}$,
$\Delta$ is the Laplace operator on $\mathbb{R}^{2}$, $V(x)=|x|^2$ is the harmonic potential, $\sigma(x)\geq0$ and $\alpha\geq0$.

To achieve their goals, they have developed a numerical collocation method but they did not provide any theoretical justification of their claims. The main objective of this paper is to rigorously prove the existence of ground state solutions of the Gross-Pitaevskii equation under study. We believe that this is a challenging and immensely important scientific question. The principle challenge comes from the fact that all classical methods do not seem to be applicable to discuss the existence of stationary solutions to (GPPD). This is essentially due to the simultanous presence of the dissipation and pumping terms simultaneously. Let us note that the establishment of ground state solutions avoids costly and very difficult experiments in the \lq\lq classical\rq\rq BEC. To achieve this goal, let us first introduce some important quantities associated to  (GPPD).

Recall that the mass $\mathcal{M}$, the Hamiltonian $\mathcal{H}$, the action $\mathcal{S}_{\mu}$ ($\mu>0)$ and the functional $\mathcal K$
associated to the equation \eqref{NLS} are given by:
\begin{align}
\label{Mu}
\mathcal{M}(u)&:=\left\| u \right\|_{L^{2}}^{2},
\\[6pt]
\label{Hu}
\mathcal{H}(u)
&:=
\frac{1}{2}(\left\|\nabla u \right\|_{L^{2}}^{2}+\left\|x u \right\|_{L^{2}}^{2})
+
\frac{1}{4}\left\| u \right\|_{L^{4}}^{4}:=\mathcal H_0(u)+\frac{1}{4}\left\| u \right\|_{L^{4}}^{4}
\\[6pt]
\label{Su}
\mathcal{S}_{\mu}(u)&:=-\frac{\mu}{2} \mathcal{M}(u)+\mathcal{H}(u),
\\[6pt]
\label{Ku}
\mathcal K(u)
&:=\int_{\mathbb R^2}(\sigma(x)-\alpha|u(x)|^2)|u(x)|^2\;dx,
\end{align}
respectively. Observe that
\begin{equation}\label{ddt mass}
\frac d{dt}{\mathcal M}(\psi(t))=\mathcal K(\psi(t))
\end{equation}
and

\begin{equation}\label{ddt hamil}
\frac d{dt}\mathcal H(\psi(t))=\int_{\mathbb R^2}(\sigma-\alpha|\psi|^2)(|\psi|^4+V|\psi|^2+|\nabla\psi|^2)-
2\alpha{\mathcal (R(\psi\nabla\bar\psi)})^2\;dx.
\end{equation}
Identity \eqref{ddt mass} shows that, at least formally, the mass and the energy are pumped into the system through the term $i\sigma\psi$ involving the parameter $\sigma$ and they are
nonlinearly damped by the term $-i\alpha|\psi|^2\psi$ involving the parameter $\alpha$. Contrarily to the complex Ginzburg-Landau equation (when a dissipatif term of the form $i\Delta\psi$ is added to the RHS of (GPDP)), one cannot obtain time-uniform estimates of the solution in the energy space. The complex Gross-Pitaevski equation reflects the non-equilibrium dynamics described above by adding pumping and decaying terms to the GP equation.\\
Before going any further, we recall a few results about the linear equation without dissipation and pumping. The equation then reads
$$
i\displaystyle{\frac{\partial \phi}{\partial t}}=(-\Delta+V(x))\phi.
$$
We define the energy space $\Sigma:=H^{1}(\mathbb{R}^{2})\cap\{u:\;xu\in L^2\}$, endowed with the $L^2$-scalar product $(u,v)_2:=\int_{\mathbb{R}^2}u(x)\bar v(x)\;dx$, by

$$
(u,v)_{\Sigma}=(\nabla u,\nabla v)_2+(x u,x v)_2+(u,v)_2: \left\| u \right\|_{\Sigma}^{2}=\left\|\nabla u \right\|_{2}^{2}+\left\|{(1+(|\cdot|^2)^{\frac{1}{2}} u} \right\|_{2}^{2}.
$$
Also, define the dual space $\Sigma^*$ of $\Sigma$ as follows. For any $v\in\Sigma^*$, there exists a unique $u\in\Sigma$ such that $H_0u=v$
with the norm on $\Sigma^*$ given by

$$
\|H_0u\|_{\Sigma^*}=\|v\|_{\Sigma^*}:=\|u\|_\Sigma.
$$
Recall that $\left\| \cdot\right\|_{p} $ is the norm in $ L^p(\mathbb R^2)$. It is well known that the
unbounded operator $H_0:= -\Delta +V$ defined on
$$
D(H_0):=\{u\in\Sigma:\quad H_0(u)\in L^2(\mathbb R^2)\}
$$
is self-adjoint.  Moreover, the lowest eigenvalue of $H_0$ denoted by $\omega_1=2$ is simple with eigenfunction
$\varphi_1(x)=\frac1{\sqrt\pi} e^{-|x|^2/2}$.
Notice that $(\varphi_1,\omega_1)$ can be constructed variationally as

$$
\omega_1=\min_{\|u\|_{L^2}=1}\frac12\int_{\mathbb R^2}|\nabla u|^2+|x|^2|u|^2\;dx:=\min _{\|u\|_{L^2}=1}\mathcal H_0.
$$
In particular, for any $u\in D(H_0)$, we have

$$
2\|u\|_{L^2}^2\leq \|xu\|_{L^2}^2+\|\nabla u\|_{L^2}^2.
$$
For more details, we refer for example to \cite{KavianWeissler}.

When the chemical potential is complex $\mu=\mu_r+i\mu_i$, solitary wave solution $\psi(x,t)=Q(x)e^{-it\mu}=Q(x)e^{t\mu_i}e^{-it\mu_r}$
would grow exponentially fast as $|t|\to\infty$ which can be bad for the analysis as well as for numerics and experiments. Assuming that $\mu=\mu_r$, yields the following stationary problem for $Q$:
\begin{equation}\label{mu SP}\tag{$\mu$-${\rm SP}$}
\mu Q= (-\Delta+V(x)+|Q|^{2})Q+i(\sigma(x)-\alpha|Q|^{2}) Q,
\qquad
Q \in \Sigma\setminus\{0\} .
\end{equation}
Multiplying \eqref{mu SP} by $\bar Q$ and integrating gives the following identity.
$$
\mu \mathcal M(Q)=2 \mathcal H(Q)+1/2\|Q\|_{L^{4}}
^4+i \mathcal K(Q).
$$
The condition for the chemical potential $\mu$ of being real is then equivalent to the fact that $Q$ is a zero of $\mathcal K$.

It is important to emphasize that due to the presence of the dissipation and pumping mechanisms, we find it hard to apply the standard variational or PDE
methods to construct soliton-type solutions of (GPPD) (i.e. a solution $Q$ of \eqref{mu SP}).
In this paper, our idea to construct a solution of \eqref{mu SP} with real chemical potential $\mu$ goes along a perturbative
way by introducing a small parameter factor in the dissipation and pumping term.
More precisely, for all $\varepsilon>0$, consider

\begin{equation}\label{NLS eps}\tag{${\rm GPPD}_ \varepsilon$}
i\displaystyle{\frac{\partial \psi}{\partial t}}=(-\Delta+V(x)+|\psi|^{2})\psi+i\varepsilon(\sigma(x)-\alpha|\psi|^{2}) \psi,
\end{equation}
and  its corresponding stationary equation

\begin{equation}\label{mu SP eps}\tag{$\mu$-${\rm SP}_\varepsilon$}
\mu Q= (-\Delta+V(x)+|Q|^{2})Q+i\varepsilon(\sigma(x)-\alpha|Q|^{2}) Q
\qquad
Q \in \Sigma\setminus\{0\} .
\end{equation}
The object is to construct a solution $(Q_\varepsilon,\mu_\varepsilon)$ in the form

$$
Q_\varepsilon=Q_\varepsilon^a+\psi_\varepsilon,\quad\mbox{and}\quad \mu_\varepsilon=\mu_\varepsilon^a+\mu_\varepsilon,
$$
where the approximate solution $(Q_\varepsilon^a,\mu_\varepsilon^a)$ will be given explicitly, and $(\psi_\varepsilon,\mu_\varepsilon)$ is
the error term that needs to be found. To define $(Q_\varepsilon^a,\mu_\varepsilon^a)$, we need to introduce some notation and state a few preliminary useful results. The first Theorem  of this paper reads as follows:

\begin{theorem}
\label{Solit wave}
Let $\sigma(x)\geq0$ be a continuous nontrivial function. There exist $\alpha_0\gg1$ and a positive $\varepsilon_0$ small such that, for any $0<\varepsilon<\varepsilon_0$ and  $\alpha>\alpha_0$, the complex Gross-Pitaevki{\color{green}{i}} equation \eqref{NLS eps} has a solitary wave solution $\psi^\varepsilon(x,t)=e^{it\mu_\varepsilon}Q^\varepsilon(x)$
with $(Q,\mu_\varepsilon)\in\Sigma\times(2,\infty)$ solving \ref{mu SP eps}.
\end{theorem}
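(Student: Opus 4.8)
The plan is a Lyapunov--Schmidt reduction near the linear ground state $\varphi_1$, in the spirit of the perturbative scheme announced above: I will construct $(Q^\varepsilon,\mu_\varepsilon)$ of the form $Q^\varepsilon=Q^a_\varepsilon+\psi_\varepsilon$, $\mu_\varepsilon=\mu^a_\varepsilon+\nu_\varepsilon$, where $(Q^a_\varepsilon,\mu^a_\varepsilon)$ is an explicit approximate solution of \eqref{mu SP eps} and $(\psi_\varepsilon,\nu_\varepsilon)$ a small correction obtained by a fixed point argument. The organising observation is the identity already recorded: pairing \eqref{mu SP eps} with $\bar Q$ and taking imaginary parts shows that $\mu_\varepsilon\in\mathbb R$ is \emph{equivalent} to $\mathcal K(Q^\varepsilon)=0$, cf.\ \eqref{Ku}. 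For $Q\approx\lambda\varphi_1$ this constraint reads $\lambda^2\int_{\mathbb R^2}\sigma\varphi_1^2\,dx\approx\alpha\lambda^4\|\varphi_1\|_4^4$, i.e.\ $\lambda^2\approx b/(\alpha\|\varphi_1\|_4^4)$ with $b:=\int_{\mathbb R^2}\sigma\varphi_1^2\,dx>0$ (here $\sigma\geq0$ continuous nontrivial and $\varphi_1>0$); thus it can be met near the linear ground state only when $\alpha$ is large, and then $\mu_\varepsilon$ is forced close to $\omega_1=2$ while the cubic term $|Q|^2Q=O(\alpha^{-1})$ becomes a genuine perturbation of $H_0$. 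This is precisely where the hypothesis $\alpha>\alpha_0$ enters.

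\textbf{Rescaling and approximate solution.} Set $\beta:=\alpha^{-1}$ and substitute $Q=\sqrt\beta\,\widetilde Q$, $\mu=2+\beta\widetilde\mu$, so that \eqref{mu SP eps} becomes
\begin{equation*}
(H_0-2)\widetilde Q=\beta\widetilde\mu\,\widetilde Q-\beta|\widetilde Q|^2\widetilde Q-i\varepsilon\big(\sigma-|\widetilde Q|^2\big)\widetilde Q ,
\end{equation*}
whose right-hand side is $O(\beta+\varepsilon)$ in $\Sigma^*$ for $\widetilde Q$ bounded in $\Sigma$, while $H_0-2$ has kernel $\mathrm{span}\{\varphi_1\}$ and spectral gap $\geq2$ on $\varphi_1^{\perp}$. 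I take $\widetilde Q^{a}:=s_0\varphi_1$, $\widetilde\mu^{a}:=b$, with $s_0:=\sqrt{b}\,\|\varphi_1\|_4^{-2}$ (equivalently $Q^a_\varepsilon=\sqrt{b/(\alpha\|\varphi_1\|_4^{4})}\,\varphi_1$, $\mu^a_\varepsilon=2+b/\alpha>2$); by the computation above the choice $s_0^2\|\varphi_1\|_4^4=b=\widetilde\mu^a$ annihilates the $\varphi_1$-component of the residual exactly, leaving a residual lying in $\varphi_1^{\perp}$ of size $O(\beta+\varepsilon)$ in $\Sigma^*$. (If a sharper residual is needed one inserts an imaginary correction $i\varepsilon\widetilde\rho$, $\widetilde\rho\perp\varphi_1$, solving a linear elliptic equation whose solvability condition is exactly the rescaled form of $\mathcal K=0$.)

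\textbf{Reduction and conclusion.} Decompose $\widetilde Q=s\varphi_1+w$ with $s=(\widetilde Q,\varphi_1)_2\in\mathbb C$ and $w\perp\varphi_1$, and use the $U(1)$-covariance $Q\mapsto e^{i\theta}Q$ of \eqref{mu SP eps} to normalise $s\in(0,\infty)$. First, the $\varphi_1^{\perp}$-projection of the rescaled equation is solved for $w$: since $H_0-2$ is boundedly invertible on $\varphi_1^{\perp}$, the contraction mapping theorem gives, for $(s,\widetilde\mu)$ in a fixed bounded set and $\beta,\varepsilon$ small, a unique $w=W(s,\widetilde\mu;\beta,\varepsilon)$ with $\|W\|_\Sigma\lesssim\beta+\varepsilon$, smooth in the parameters, $U(1)$-equivariant, and with $\mathrm{Im}\,W=O(\varepsilon)$ (its imaginary part being sourced only by the $i\varepsilon$-term). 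Substituting $w=W$, the remaining $\varphi_1$-component is a pair of real scalar equations in $(s,\widetilde\mu)\in(0,\infty)\times\mathbb R$: its real part, after division by $\beta$, reads $\widetilde\mu=s^2\|\varphi_1\|_4^4+O(\beta+\varepsilon+\varepsilon^2\beta^{-1})$; its imaginary part, after division by $\varepsilon$, reads $s^2\|\varphi_1\|_4^4=b+O(\beta+\varepsilon)$ --- the latter being the leading form of the constraint $\mathcal K(Q^\varepsilon)=0$. The Jacobian of the limiting $2\times2$ system at $(s_0,b)$ has determinant $-2s_0\|\varphi_1\|_4^4\neq0$, so the implicit function theorem produces a unique solution $(s_{\beta,\varepsilon},\widetilde\mu_{\beta,\varepsilon})$ near $(s_0,b)$ provided $\beta\to0$ and $\varepsilon^2\beta^{-1}\to0$. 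Unwinding, $\mu_\varepsilon:=2+\alpha^{-1}\widetilde\mu_{\beta,\varepsilon}>2$ and $Q^\varepsilon:=\alpha^{-1/2}(s_{\beta,\varepsilon}\varphi_1+W)\in\Sigma\setminus\{0\}$ solve \eqref{mu SP eps}, hence $\psi^\varepsilon(x,t)=e^{it\mu_\varepsilon}Q^\varepsilon(x)$ is a solitary wave of \eqref{NLS eps}. Fixing $\alpha_0$ large and then, for each $\alpha>\alpha_0$, $\varepsilon_0=\varepsilon_0(\alpha)$ (e.g.\ $\varepsilon_0=\alpha^{-3/4}$, so that $\beta=\alpha^{-1}$ and $\varepsilon^2\beta^{-1}<\alpha^{-1/2}$ are as small as required) yields the statement.

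\textbf{Main difficulty.} The delicate point is the $\beta$--$\varepsilon$ bookkeeping in the last step: the two small parameters enter the two scalar equations at competing orders, and dividing the real equation by $\beta$ brings in a factor $\beta^{-1}\sim\alpha$ --- the rescaled shadow of the large inverse $(\mu-2)^{-1}\sim\alpha$ of the linearisation of the unscaled stationary equation on real perturbations --- so one must verify that every remainder is genuinely $o(1)$ in the regime $\alpha\to\infty$, $\varepsilon^2\alpha\to0$, and that the gauge direction has been excised cleanly, so that the reduced $2\times2$ system really governs $(s,\widetilde\mu)$. The remaining ingredients are standard: the contraction of the first step uses only $\Sigma\hookrightarrow L^4(\mathbb R^2)$ to absorb the cubic term and the Gaussian decay of $\varphi_1$ (together with the hypotheses on $\sigma$) to pair $\sigma$ against the relevant functions; the spectral properties of $H_0-2$ are recalled in the excerpt; and the existence, for every $\alpha>0$, of a zero $\mu_0$ of $\mu\mapsto\mathcal K(R_\mu)$ along the nonlinear ground-state branch $R_\mu$ of $H_0u+|u|^2u=\mu u$ follows from the intermediate value theorem --- the role of $\alpha\gg1$ being merely to locate $\mu_0=2+b\alpha^{-1}+o(\alpha^{-1})$ in the perturbative regime.
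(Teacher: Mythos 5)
Your proposal is correct, and it reaches the theorem by a genuinely different route from the paper's. The paper first solves the $\varepsilon=0$ problem globally and variationally: it minimizes $\mathcal H$ on each mass sphere $\|u\|_{L^2}^2=M$ (Lemma \ref{lemma 2}), shows that $\mathcal K(v_M)$ is positive for small $M$ and negative for large $M$ (the latter via the Thomas--Fermi computation giving $\|v_M\|_{L^4}^4\gtrsim M^{3/2}$), and extracts by continuity of $M\mapsto v_M$ a profile $Q_0$ with $\mathcal K(Q_0)=0$ (Proposition \ref{exis Q0}); the largeness of $\alpha$ enters only in the Appendix, to force this zero of $\mathcal K$ to occur at small mass, where the variational branch coincides with the Crandall--Rabinowitz branch off $(\varphi_1,2)$ and where $L_+$ is invertible and $\ker L_-=\mathrm{span}\{Q_0\}$ (Proposition \ref{kernal L-}). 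It then expands in $\varepsilon$ alone, $Q^a_\varepsilon=Q_0+i\varepsilon Q_{1i}+\varepsilon^2Q_{2r}+i\varepsilon^3Q_{3i}$, and closes with a contraction for the $O(\varepsilon^4)$ remainder using $L_\pm^{-1}$. You instead rescale by $\alpha^{-1/2}$ and perturb simultaneously in $\beta=\alpha^{-1}$ and $\varepsilon$ directly off the linear ground state $\varphi_1$, so the only spectral input is the gap of $H_0-2$ on $\varphi_1^{\perp}$, and the constraint $\mathcal K=0$ reappears as the second equation of a $2\times2$ reduced system for $(s,\widetilde\mu)$ with nondegenerate Jacobian. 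Your version is more self-contained (no compactness or rearrangement arguments, no separate spectral analysis of $L_\pm$) and makes the role of $\alpha_0$ transparent; the price is that it only reaches the regime $\mu_\varepsilon\to 2$, $\|Q^\varepsilon\|_{L^2}\sim\alpha^{-1/2}$, and needs the coupling $\varepsilon^2\alpha\to0$, so your $\varepsilon_0$ degenerates as $\alpha\to\infty$ --- though the paper's constants (e.g.\ the factor $\|Q_0\|_{L^2}^{-2}$ in $\kappa_\varepsilon$) degenerate in the same limit, so both proofs really yield $\varepsilon_0=\varepsilon_0(\alpha)$. The two points on which your sketch leans hardest, and which you should spell out if you write this up, are the gauge fixing (that the normalisation $\mathrm{Im}(\widetilde Q,\varphi_1)_2=0$ is preserved by the fixed-point map for $w$) and the legitimacy of dividing the imaginary scalar equation by $\varepsilon$ (every $O(\beta)$ contribution to it must carry a factor $\mathrm{Im}\,w=O(\varepsilon)$); both claims are true under your gauge, but they are exactly where careless bookkeeping would break the argument.
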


\begin{remark}
It would be very desirable to extend the branch of standing wave solutions we constructed for $\varepsilon$ small to all values of $\varepsilon$. Unfortunately, so far we were not able to do so given the non-equilibrium structure of the model.
\end{remark}
Our second result concerns the Cauchy problem associated to \eqref{NLS}. We have.

\begin{theorem}
\label{Cauchy pb}
Assume $\alpha\geq0$, and $\sigma\in L^\infty(\mathbb R^2)\cap L^4(\mathbb R^2)$. For any $\psi_0\in L^2(\mathbb R^2)$, there exists a unique global solution $\psi\in C([0,\infty),L^2(\mathbb R^2))\cap L^4_{\text{loc}}([0,\infty),L^4(\mathbb R^2))$ of \eqref{NLS} with $\psi(x,0)=\psi_0(x)$.  Moreover, for any $T>0$, we have

$$
\int_0^T\int_{\mathbb R^2}|\psi(x,t)|^4\;dxdt\lesssim e^{\|\sigma\|_{L^\infty}T}\|\psi_0\|_{L^2}.
$$
\end{theorem}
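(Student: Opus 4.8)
The plan is to recast \eqref{NLS} as a Duhamel equation for the harmonic-oscillator propagator $e^{-itH_0}$, with $H_0=-\Delta+|x|^2$, and to solve it by a Strichartz-based fixed point, globalizing afterwards by means of the mass identity \eqref{ddt mass}. In integral form the equation reads
\[
\psi(t)=e^{-itH_0}\psi_0+\int_0^t e^{-i(t-s)H_0}\Bigl(-i|\psi|^2\psi+(\sigma-\alpha|\psi|^2)\psi\Bigr)(s)\,ds .
\]
First I would record the local-in-time Strichartz estimates for $e^{-itH_0}$ on $\mathbb R^2$: Mehler's formula shows that, for $t$ in a bounded interval avoiding the focal times, $e^{-itH_0}$ has the same $|t|^{-1}$ dispersive decay from $L^1$ to $L^\infty$ as the free group, so the Keel--Tao theorem yields, for every admissible pair in dimension two — in particular $(q,r)=(4,4)$ and $(\infty,2)$ — the homogeneous bound $\|e^{-itH_0}f\|_{L^q_TL^r_x}\lesssim_T\|f\|_{L^2}$ together with the inhomogeneous/dual bound $\bigl\|\int_0^t e^{-i(t-s)H_0}F(s)\,ds\bigr\|_{L^\infty_TL^2_x\cap L^4([0,T]\times\mathbb R^2)}\lesssim_T\|F\|_{L^{4/3}([0,T]\times\mathbb R^2)}+\|F\|_{L^1_TL^2_x}$, with constants locally bounded in $T$. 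Since the cubic nonlinearity is $L^2$-critical when $d=2$, the natural resolution space is $X_T:=C([0,T];L^2(\mathbb R^2))\cap L^4([0,T]\times\mathbb R^2)$.

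For local well-posedness I would apply the Banach fixed point theorem to the map defined by the right-hand side above on a ball of $X_T$. The two cubic terms are controlled by H\"older in space-time, $\bigl\||u|^2u\bigr\|_{L^{4/3}([0,T]\times\mathbb R^2)}=\|u\|_{L^4([0,T]\times\mathbb R^2)}^3$, which lands in the dual admissible class; the linear pumping term $\sigma\psi$ is handled with $\sigma\in L^4$ via $\|\sigma u\|_{L^{4/3}_x}\le\|\sigma\|_{L^4}\|u\|_{L^2_x}$, whence $\|\sigma u\|_{L^{4/3}([0,T]\times\mathbb R^2)}\le T^{3/4}\|\sigma\|_{L^4}\|u\|_{L^\infty_TL^2}$ (one may equally put $\sigma u\in L^1_TL^2_x$ using $\sigma\in L^\infty$). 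As the problem is $L^2$-critical, smallness of $T$ alone does not close the estimate; instead one uses $\|e^{-itH_0}\psi_0\|_{L^4([0,T]\times\mathbb R^2)}\to 0$ as $T\downarrow 0$ (absolute continuity of the integral, after a density argument), so the iteration converges on some interval $[0,T_0]$ with $T_0$ depending on the profile of $\psi_0$ (and on $\alpha,\sigma$), not merely on $\|\psi_0\|_{L^2}$. This gives a unique solution in $X_{T_0}$; uniqueness in the class of the statement follows from the same differencing estimate, and $\psi\in C([0,T_0];L^2)$ from the continuity of the Duhamel term into $L^2$. Iterating produces a maximal solution on an interval $[0,T^{*})$ with the blow-up alternative: either $T^{*}=\infty$, or $\|\psi\|_{L^4([0,T^{*})\times\mathbb R^2)}=\infty$.

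It then remains to show $T^{*}=\infty$, which is where the mass identity enters. After justifying \eqref{ddt mass} for an $L^2$ solution by approximating $\psi_0$ in $L^2$ by regular data and passing to the limit with the continuous dependence from the local theory, one uses $\alpha\ge 0$ to get
\[
\frac{d}{dt}\mathcal M(\psi(t))=\mathcal K(\psi(t))=\int_{\mathbb R^2}(\sigma-\alpha|\psi|^2)|\psi|^2\,dx\le\|\sigma\|_{L^\infty}\mathcal M(\psi(t))-\alpha\|\psi(t)\|_{L^4}^4\le\|\sigma\|_{L^\infty}\mathcal M(\psi(t)),
\]
so Gr\"onwall gives $\mathcal M(\psi(t))\le e^{\|\sigma\|_{L^\infty}t}\mathcal M(\psi_0)$ and, integrating the sharper inequality, $\alpha\int_0^T\!\!\int_{\mathbb R^2}|\psi|^4\,dx\,dt\le\mathcal M(\psi_0)+\|\sigma\|_{L^\infty}\int_0^T\mathcal M(\psi(t))\,dt\le e^{\|\sigma\|_{L^\infty}T}\mathcal M(\psi_0)$. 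This is the asserted space-time bound, and when $\alpha>0$ it forces $\|\psi\|_{L^4([0,T]\times\mathbb R^2)}<\infty$ for every finite $T$, hence $T^{*}=\infty$. When $\alpha=0$ the dissipation is absent and global existence has to be obtained separately, from the defocusing sign of the cubic term together with the known global space-time estimate for the $L^2$-critical Schr\"odinger equation (the harmonic potential being removable by the lens transform).

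I expect the delicate point to be this last, globalization step: because the cubic nonlinearity is mass-critical in two dimensions, the local lifespan is genuinely profile-dependent, so passing from a local to a global solution requires an a priori $L^4_{t,x}$ control on every finite interval — and the structurally pleasant feature of the model is that the dissipation term $-\alpha|\psi|^2\psi$ supplies precisely that control for free. The remaining ingredients — the local-in-time Strichartz estimates for $e^{-itH_0}$ (or a citation for them) and the justification of the mass identity at the $L^2$ level — are standard.
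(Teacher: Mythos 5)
Your proposal follows essentially the same route as the paper: a Strichartz-based contraction in $C([0,T];L^2)\cap L^4([0,T]\times\mathbb R^2)$ using the local-in-time estimates for $e^{-itH_0}$ (the paper cites Carles for these), with smallness coming from $\|e^{-itH_0}\psi_0\|_{L^4([0,T]\times\mathbb R^2)}\to0$ rather than from $T$ alone, then the blow-up alternative in $L^4_{t,x}$, and finally the mass identity plus Gr\"onwall to extract the a priori bound $\alpha\int_0^T\!\!\int|\psi|^4\le e^{\|\sigma\|_{L^\infty}T}\mathcal M(\psi_0)$ that rules out finite-time blow-up. The one place you go beyond the paper is the case $\alpha=0$: the paper's globalization (its Corollary 4.5) divides by $\alpha$ and so only covers $\alpha>0$ despite the theorem being stated for $\alpha\ge0$, whereas you correctly flag that $\alpha=0$ needs a separate argument (defocusing mass-critical global theory plus the lens transform); that is a legitimate refinement, not a defect of your proof.
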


The paper is organized as follows: In the next section, some preliminary results are proven. This will prepare the field to the establishment of ground state solutions. In section 3, we will present our self-contained proof built up to prove the existence of ground state solutions. The last section of this paper is dedicated to the Cauchy problem. We show the existence and uniqueness of solutions for a large class of damping and pumping terms. We also discuss the non-conservation of some important functionals associated to the Schr\"{o}dinger equation.

\section{Preliminaries}
Here we focus on the problem without pumping and decay of the energy, that is when $\varepsilon=0$. We start by recalling a few known facts about the space $\Sigma$, for which the proof can for example be found in Kavian-Weissler \cite{KavianWeissler}.
\begin{lemma}\label{lemma 1}
The Hilbert space $\Sigma$ is compactly embedded in $L^p(\mathbb{R}^2)$ for any $p\in[2,\infty)$.
\end{lemma}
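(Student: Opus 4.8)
The plan is to reduce the whole statement to the compact embedding $\Sigma\hookrightarrow L^2(\mathbb R^2)$ and then bootstrap to arbitrary $p\in(2,\infty)$ by interpolating against the (continuous, but not compact) Sobolev embedding $H^1(\mathbb R^2)\hookrightarrow L^q(\mathbb R^2)$, which in dimension two holds for every finite $q\ge 2$. So first I would take a bounded sequence $(u_n)\subset\Sigma$, say $\|u_n\|_\Sigma\le M$. In particular $(u_n)$ is bounded in $H^1(\mathbb R^2)$, so after extracting a subsequence there is $u\in\Sigma$ with $u_n\rightharpoonup u$ weakly in $\Sigma$; by weak lower semicontinuity $\|xu\|_{L^2}\le M$. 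By Rellich--Kondrachov, $H^1(B_R)\hookrightarrow L^2(B_R)$ is compact on every ball $B_R=\{|x|<R\}$, so, along a diagonal subsequence over $R_k\to\infty$, $u_n\to u$ strongly in $L^2(B_R)$ for each fixed $R$.

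The only real point is the uniform smallness of the tails, and this is exactly where the weight $|x|^2$ built into $\|\cdot\|_\Sigma$ is used: for any $R>0$,
\[
\int_{|x|>R}|u_n(x)|^2\,dx\;\le\;\frac1{R^2}\int_{|x|>R}|x|^2|u_n(x)|^2\,dx\;\le\;\frac{M^2}{R^2},
\]
and the same bound holds for $u$. Hence, given $\delta>0$, picking $R$ with $M^2/R^2<\delta$ gives
\[
\|u_n-u\|_{L^2(\mathbb R^2)}^2\;\le\;\|u_n-u\|_{L^2(B_R)}^2+2\!\int_{|x|>R}\!|u_n|^2+2\!\int_{|x|>R}\!|u|^2\;\le\;\|u_n-u\|_{L^2(B_R)}^2+4\delta,
\]
and the first term tends to $0$. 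Letting $\delta\to0$ proves $u_n\to u$ in $L^2(\mathbb R^2)$, i.e. the case $p=2$.

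For $p\in(2,\infty)$, fix $q>p$ and write $\tfrac1p=\tfrac\theta2+\tfrac{1-\theta}q$ with $\theta\in(0,1)$; the standard $L^p$-interpolation inequality together with the continuous embedding $H^1\hookrightarrow L^q$ (with norm $C$) yields
\[
\|u_n-u\|_{L^p}\;\le\;\|u_n-u\|_{L^2}^{\theta}\,\|u_n-u\|_{L^q}^{1-\theta}\;\le\;(2CM)^{1-\theta}\,\|u_n-u\|_{L^2}^{\theta}\;\longrightarrow\;0,
\]
which gives compactness into $L^p$ for every $p\in[2,\infty)$. There is no genuine obstacle in this argument; the one thing to be careful about is organizing the subsequence extractions cleanly (one weak limit in $\Sigma$, then a single diagonal subsequence producing $L^2_{\mathrm{loc}}$ convergence), after which the tail bound above closes the estimate uniformly in $n$.
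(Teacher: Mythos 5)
Your proof is correct and complete. The paper itself does not prove this lemma; it simply cites Kavian--Weissler \cite{KavianWeissler} for it, so there is no in-paper argument to compare against. What you wrote is the standard proof: the harmonic-potential weight gives the uniform tail bound $\int_{|x|>R}|u_n|^2\le M^2/R^2$, Rellich--Kondrachov handles the compact part on balls, and the case $p=2$ is upgraded to all finite $p$ by interpolating against the (merely continuous) two-dimensional embedding $H^1\hookrightarrow L^q$ for every finite $q$. All the steps check out, including the weak lower semicontinuity used to get the tail bound for the limit $u$ and the choice $q>p$ ensuring $\theta\in(0,1)$; this is exactly the argument one finds in the cited reference.
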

Throughout this paper, we suppose that $\sigma\geq0$ is nontrivial continuous and is in  $L^\infty(\mathbb{R}^2)$ function.

\begin{lemma}\label{lemma 2}
For any $M>0$, there exists a unique $v_M\in\Sigma$ solving the following constrained variational problem:
$$(V_M):\quad \mu_M=\inf\{\mathcal H(u):\int u^2=M\}; $$
In addition, $v_M$ is non-negative, radial and radially decreasing.
\end{lemma}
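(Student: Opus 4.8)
The plan is the direct method for existence, symmetric decreasing rearrangement for the qualitative properties, and a hidden-convexity change of variables for uniqueness.
\emph{Existence.} First note that $\mathcal H\ge\mathcal H_0$ and $2\mathcal H_0(u)=\|\nabla u\|_2^2+\|xu\|_2^2=\|u\|_\Sigma^2-\|u\|_2^2$, so on the constraint $\{\|u\|_2^2=M\}$ one has $\|u\|_\Sigma^2\le 2\mathcal H(u)+M$; testing with a rescaled Gaussian shows $\mu_M$ is finite, hence any minimizing sequence $(u_n)$ is bounded in $\Sigma$. Passing to a subsequence, $u_n\rightharpoonup v_M$ weakly in the Hilbert space $\Sigma$, and by Lemma \ref{lemma 1} the embedding $\Sigma\hookrightarrow L^2(\mathbb R^2)\cap L^4(\mathbb R^2)$ is compact, so $u_n\to v_M$ strongly in $L^2$ and in $L^4$. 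Strong $L^2$ convergence gives $\|v_M\|_2^2=M$, so $v_M$ is admissible; strong $L^4$ convergence gives $\|u_n\|_4^4\to\|v_M\|_4^4$; and $u\mapsto\|\nabla u\|_2^2$, $u\mapsto\|xu\|_2^2$ are convex and continuous on $\Sigma$, hence weakly lower semicontinuous. Combining, $\mathcal H(v_M)\le\liminf_n\mathcal H(u_n)=\mu_M$, so $v_M$ attains the infimum.

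\emph{Symmetry and monotonicity.} Since $|\nabla|u||\le|\nabla u|$ pointwise and $\|u\|_2$, $\|u\|_4$, $\|xu\|_2$ depend only on $|u|$, we have $\mathcal H(|u|)\le\mathcal H(u)$, so the minimizer may be taken non-negative. Replacing it by its symmetric decreasing rearrangement $u^*$ leaves $\|u\|_2$ and $\|u\|_4$ unchanged, decreases $\|\nabla u\|_2$ by the P\'olya--Szeg\H{o} inequality, and decreases $\|xu\|_2$: writing $|x|^2=\int_0^\infty\mathbf 1_{\{|x|>\sqrt t\}}\,dt$ and using $\int_{\{|x|\le\sqrt t\}}|u|^2\le\int_{\{|x|\le\sqrt t\}}|u^*|^2$ (each ball being its own symmetric decreasing rearrangement) together with $\|u^*\|_2=\|u\|_2$ yields $\int|x|^2|u^*|^2\le\int|x|^2|u|^2$; in particular $u^*\in\Sigma$. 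Hence the infimum is attained at a non-negative, radial, radially decreasing function.

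\emph{Uniqueness.} This is the main obstacle, since the constraint $\|u\|_2^2=M$ is not convex. I would resolve it by the substitution $\rho=v^2$ for non-negative $v\in\Sigma$: then $\|\nabla v\|_2^2=\tfrac14\int|\nabla\rho|^2/\rho$, and because $(\rho,p)\mapsto|p|^2/\rho$ is jointly convex on $(0,\infty)\times\mathbb R^2$ (a perspective function), $\rho\mapsto\int|\nabla\rho|^2/\rho$ is convex; moreover $\rho\mapsto\int|x|^2\rho$ is linear, $\rho\mapsto\int\rho^2$ is strictly convex, and $\{\rho\ge0:\int\rho=M\}$ is convex. Thus $\widetilde{\mathcal H}(\rho):=\tfrac18\int|\nabla\rho|^2/\rho+\tfrac12\int|x|^2\rho+\tfrac14\int\rho^2$ is strictly convex on this admissible set, and since $\mathcal H(v)=\widetilde{\mathcal H}(v^2)$ for $v\ge0$, it has at most one minimizer; hence the non-negative minimizer of $(V_M)$ is unique. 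Finally $v_M$ solves the Euler--Lagrange equation $-\Delta v_M+|x|^2 v_M+v_M^3=\mu_M v_M$, so by elliptic regularity and the strong maximum principle $v_M>0$ on $\mathbb R^2$; consequently $|w|=v_M$ for any minimizer $w$ and, $w$ being continuous and nonvanishing, $w=\pm v_M$, so the non-negative normalization singles out the unique $v_M$ claimed, which by the previous paragraph is radial and radially decreasing.
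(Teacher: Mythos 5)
Your existence and symmetry arguments coincide with the paper's: bounded minimizing sequence via $\mathcal H\ge\mathcal H_0$, weak $\Sigma$-compactness plus the compact embedding of Lemma \ref{lemma 1} to pass to the limit in the constraint and in $\|\cdot\|_4^4$, weak lower semicontinuity for the quadratic part, and then symmetric decreasing rearrangement (equimeasurability for $\|\cdot\|_2,\|\cdot\|_4$, P\'olya--Szeg\H{o} for the gradient, and the monotone-potential inequality $\int|x|^2|u^*|^2\le\int|x|^2|u|^2$, which you prove by layer cake and the paper cites from the rearrangement literature).

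Where you genuinely diverge is uniqueness. The paper disposes of it in one line, ``the uniqueness of the minimizer follows directly from the strict convexity of the functional $\mathcal H$.'' Taken literally this is incomplete: the constraint set $\{\int u^2=M\}$ is a sphere, not a convex set, so strict convexity of $\mathcal H$ alone does not forbid two distinct minimizers (the midpoint of two constrained minimizers has smaller energy but also smaller mass, and rescaling it back up to mass $M$ increases $\mathcal H$, so no contradiction is immediate). Your substitution $\rho=v^2$ is the standard and correct way to make the convexity argument rigorous: the constraint becomes the affine condition $\int\rho=M$, the kinetic term becomes the convex perspective functional $\tfrac14\int|\nabla\rho|^2/\rho$, the potential term is linear in $\rho$, and $\tfrac14\int\rho^2$ is strictly convex, so the non-negative minimizer is unique; the passage from $|w|=v_M$ to $w=\pm v_M$ via the Euler--Lagrange equation and the strong maximum principle then classifies all minimizers. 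In short, you prove what the paper only asserts, at the cost of a slightly longer argument; this strengthens rather than contradicts the lemma, and is the version I would recommend keeping.
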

\begin{proof} It is sufficient to show  the existence of a minimizer of $(V_M)$. The uniqueness of the minimizer follows
directly from the strict convexity of the functional $\mathcal H$.

Now let us fix $M>0$, let $(v_n)$ be a minimizing sequence of $(V_M)$, i.e., $\lim_{n\to\infty}\mathcal H(v_n)=\mu_M$
and $\int v^2_n=M$.
Then $$\mathcal H(v_n)\geq\frac{1}{2}\|\nabla v_n\|_2^2+\frac{1}{2}\|x v_n\|_2^2.$$
Therefore, we can find $K_M>0$ such that
$$\|\nabla v_n\|_2^2+\|x v_n\|_2^2\leq K_M.$$
This implies that
\begin{equation}\label{eq a}
\|v_n\|_{\Sigma}^2\leq M+K_M.
\end{equation}
Consequently, there exists $u\in\Sigma$ such that
$$v_n\rightharpoonup u\quad {\rm in} \ \ \ \Sigma.$$
This implies, thanks to Lemma \ref{lemma 1}, that $v_n\to u$ in $L^2(\mathbb{R}^2)$ and $L^4(\mathbb{R}^2)$.
Thus, we certainly have that $\int u^2=M$ implying that $u$ is non-trivial, and by the lower semi-continuity, we can write:
$$\mathcal H(u)\leq \liminf_n \mathcal H(v_n)=\mu_M.$$
Therefore, $\mathcal H(u)=\mu_M$.
On the other hand, let $u$ be the unique minimizer of $(V_M)$, then $u$ is a non-negative function in $\Sigma$ since
$$
\mathcal H(|u|)\leq \mathcal H(u), \quad\mbox{and}\quad M(|u|)=M(u).
$$
Furthermore, by rearrangement inequalities \cite{HS1,HK}, we have:
$$\int {|u|}^2=\int (|u|^*)^2$$
$$\int {|u|}^4=\int (|u|^*)^4$$
$$\int {|x|^2}{|u|}^2\geq\int |x|^2(|u|^*)^2$$
$$\int |{\nabla|u|}|^2\geq\int |\nabla(|u|^*)|^2.$$
Combining these identities, it follows that
$$\mathcal H(|u|^*)\leq \mathcal H(|u|).$$
\end{proof}
The next Lemma, addresses the regularity of the Hamiltonian $\mathcal H$, as well as the map  $M\to\mu_M$.

\begin{lemma}\label{lemma 3}
\smallskip
The Hamiltonian $\mathcal H$ is in $C^1(\Sigma,\mathbb{R})$. Moreover, for all $u\in\Sigma$ we have
\begin{equation}
\tag{i} \|\mathcal H'(u)\|_{\Sigma^{-1}}\leq C\{\|u\|_{\Sigma}+\|u\|_{\Sigma}^3\}\quad \mbox{for all}\quad  u \in\Sigma,
\end{equation}
and the function
\begin{equation} \tag{ii}  M\to \mu_M=\mathcal H(v_M),\quad \mbox{is continuous on}\quad (0,\infty).
\end{equation}
\end{lemma}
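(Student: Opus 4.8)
The plan is to regard $\Sigma$ as a real Hilbert space of complex-valued functions and to handle the quadratic and quartic parts of $\mathcal H=\mathcal H_0+\tfrac14\|\cdot\|_{4}^{4}$ separately. The form $\mathcal H_0$ is bounded and symmetric bilinear on $\Sigma$, hence smooth, with Gateaux differential $\mathcal H_0'(u)\cdot v=\mathrm{Re}\,[(\nabla u,\nabla v)_2+(xu,xv)_2]$, so that $\|\mathcal H_0'(u)\|_{\Sigma^{-1}}\le C\|u\|_\Sigma$ by Cauchy--Schwarz. For $N(u):=\tfrac14\|u\|_{4}^{4}=\tfrac14\int_{\mathbb R^2}|u|^4\,dx$ I would compute the Gateaux differential $N'(u)\cdot v=\mathrm{Re}\int_{\mathbb R^2}|u|^2u\,\bar v\,dx$; Hölder's inequality gives $|N'(u)\cdot v|\le\int_{\mathbb R^2}|u|^3|v|\,dx\le\|u\|_4^3\|v\|_4$, and since $\Sigma\hookrightarrow L^4$ by Lemma~\ref{lemma 1} this yields $\|N'(u)\|_{\Sigma^{-1}}\le C\|u\|_\Sigma^3$. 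Adding the two estimates produces (i).

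To promote this to $C^1$-regularity I would check that $u\mapsto\mathcal H'(u)$ is continuous from $\Sigma$ into $\Sigma^{-1}$; only the $N'$ part needs work. Using the elementary pointwise bound $\big||a|^2a-|b|^2b\big|\le C(|a|^2+|b|^2)|a-b|$ for $a,b\in\mathbb C$, one gets $|(N'(u)-N'(w))\cdot v|\le C\int_{\mathbb R^2}(|u|^2+|w|^2)|u-w||v|\,dx$, and the generalized Hölder inequality with four factors in $L^4$ bounds this by $C(\|u\|_4^2+\|w\|_4^2)\|u-w\|_4\|v\|_4\le C(\|u\|_\Sigma^2+\|w\|_\Sigma^2)\|u-w\|_\Sigma\|v\|_\Sigma$, i.e. local Lipschitz continuity of $N'$. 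Continuity of the Gateaux differential on a neighborhood is the classical criterion for Fréchet differentiability, whence $\mathcal H\in C^1(\Sigma,\mathbb R)$.

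For (ii) the key tool is the mass-preserving dilation $u\mapsto t^{1/2}u$, under which $\int_{\mathbb R^2}|t^{1/2}u|^2\,dx=t\int_{\mathbb R^2}|u|^2\,dx$ and $\mathcal H(t^{1/2}u)=t\,\mathcal H_0(u)+t^2\tfrac14\|u\|_4^4$. Given $M>0$ and $M'\to M$, testing $(V_{M'})$ with $t^{1/2}v_M$, $t=M'/M$, gives $\mu_{M'}\le\tfrac{M'}{M}\mathcal H_0(v_M)+\big(\tfrac{M'}{M}\big)^2\tfrac14\|v_M\|_4^4\to\mu_M$, so $\limsup_{M'\to M}\mu_{M'}\le\mu_M$. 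In particular $\mu_{M'}$ is bounded for $M'$ near $M$, and since $\mathcal H_0(v_{M'})$ and $\tfrac14\|v_{M'}\|_4^4$ are nonnegative with sum $\mu_{M'}$, each is bounded uniformly in such $M'$. Then testing $(V_M)$ with $(M/M')^{1/2}v_{M'}$ yields $\mu_M\le\tfrac{M}{M'}\mathcal H_0(v_{M'})+\big(\tfrac{M}{M'}\big)^2\tfrac14\|v_{M'}\|_4^4=\tfrac{M}{M'}\mu_{M'}+\big[\big(\tfrac{M}{M'}\big)^2-\tfrac{M}{M'}\big]\tfrac14\|v_{M'}\|_4^4$; letting $M'\to M$ and using the uniform bound on $\|v_{M'}\|_4^4$ gives $\mu_M\le\liminf_{M'\to M}\mu_{M'}$. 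The two inequalities together give continuity of $M\mapsto\mu_M$ on $(0,\infty)$.

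I do not expect a genuine obstacle here. The only point requiring care is that the a priori bound on the minimizers $v_{M'}$ used in the lower half of (ii) be uniform as $M'\to M$, which is why I would extract it from the easy upper bound $\limsup_{M'\to M}\mu_{M'}\le\mu_M$ before running the reverse comparison; the differentiability of the quartic term is routine Nemytskii-operator bookkeeping resting on the embedding $\Sigma\hookrightarrow L^4$ of Lemma~\ref{lemma 1}.
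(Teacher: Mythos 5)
Your proof is correct, but for part (ii) it takes a genuinely different route from the paper. For (i) the paper simply refers to \cite{HS2}, whereas you carry out the standard computation (splitting $\mathcal H=\mathcal H_0+\tfrac14\|\cdot\|_4^4$, Hölder plus the embedding $\Sigma\hookrightarrow L^4$, and the pointwise bound $\bigl||a|^2a-|b|^2b\bigr|\lesssim(|a|^2+|b|^2)|a-b|$ for the continuity of the differential); this is the expected argument and fills in what the paper leaves to a reference. For (ii) the paper rescales a (near-)minimizer at one mass into a competitor at the other mass and then estimates $|\mathcal H(w_n)-\mathcal H(v_n)|$ by the mean-value inequality $\sup\|\mathcal H'\|_{\Sigma^{-1}}\cdot\|w_n-v_n\|_\Sigma$, i.e.\ it genuinely uses estimate (i) together with a uniform $\Sigma$-bound on the minimizing sequences. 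You instead exploit the exact homogeneity of the two pieces of $\mathcal H$ under the mass-preserving dilation $u\mapsto t^{1/2}u$, namely $\mathcal H(t^{1/2}u)=t\,\mathcal H_0(u)+t^2\tfrac14\|u\|_4^4$, which gives the limsup bound by direct computation and the liminf bound once the uniform bound on $\|v_{M'}\|_4^4$ is extracted from the already-established boundedness of $\mu_{M'}$ (you correctly order these two steps). Your version is more elementary in that part (ii) becomes independent of part (i), and the quantitative dependence on $|M'/M-1|$ is explicit; the paper's mean-value argument is less computation-specific and would survive perturbations of $\mathcal H$ that destroy the clean scaling, at the cost of needing the $C^1$ bound. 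Incidentally, your scaling $t^{1/2}v_M$ with $t=M'/M$ is the correct normalization, whereas the paper's $w_n=\frac{M_n}{M}v_n$ does not actually have mass $M_n$ as written; your computation avoids that slip.
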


\begin{proof} The proof of (i)  follows from standard arguments. For example, we refer to reference \cite{HS2}, and we just prove (ii).\\
Fix $M>0$. Let $M_n\subset(0,\infty)$ be a sequence of positive real numbers such that $M_n\to M$. We will first
prove that
\begin{equation}\label{eq 2}
\limsup_n \mu_{M_n}\leq \mu_M.
\end{equation}
Let $(v_n)$ be a sequence such that $\int v_n^2=M$ and $\mathcal H(v_n)\to \mu_M.$
By (\ref{eq a}), we can find $L>0$ such that
$$\|v_n\|_{\Sigma}^2\leq L.$$
Now let $w_n=\frac{M_n}{M}v_n$, then $\int w_n^2=M_n$ and
$$\|v_n-w_n\|_{\Sigma}=|1-\frac{M_n}{M}|\|v_n\|_{\Sigma}\leq |1-\frac{M_n}{M}|L$$
for any $n\in \mathbb{N}$.

Therefore, we can find $n_0$ such that
$$\|v_n-w_n\|_{\Sigma}\leq L+1$$
for any $n\geq n_0$.

It follows from  (i) that there exists  a constant $K(L)$ such that $\|\mathcal H'(u)\|_{\Sigma^{-1}}\leq K(L)$
for all $u\in \Sigma$ such that $\|u\|_{\Sigma}\leq 2L+1$.

Thus, for all $n\geq n_0$,
\begin{eqnarray*}
|\mathcal H(w_n)-\mathcal H(v_n)|&=& |\int_0^1\frac{d}{dt}\mathcal H(tw_n+(1-t)v_n)dt|
\\&\leq& \sup_{\|u\|_{\Sigma}\leq 2L+1}\|\mathcal H'(u)\|_{\Sigma^{-1}}\|v_n-w_n\|_{\Sigma}
\\&\leq& K(L)L|1-\frac{M_n}{M}|.
\end{eqnarray*}
Consequently, $\mu_{M_n}\leq \mathcal H(w_n)\leq \mathcal H(v_n)+ K(L)L|1-\frac{M_n}{M}|.$

Then $\limsup \mu_{M_n}\leq \lim \mathcal H(v_n)=\mu_M$ and then
\begin{equation}\label{limsup}
\limsup \mu_{M_n}\leq \mu_M.
\end{equation}
Now let us prove that if $M_n\to M$, then
\begin{equation}\label{eq 4}
\mu_M\leq \liminf \mu_{M_n}.
\end{equation}
For all $n\in \mathbb{N}$, there exists $(v_n)$ a sequence of functions in $\Sigma$ such that $\int v_n^2=M_n$ and
$$\mu_{M_n}\leq \mathcal H(v_n)\leq \mu_{M_n}+\frac{1}{n}.$$
Combining the proof of (\ref{eq a}) and (\ref{eq 4}), we can find $K>0$ such that $\|v_n\|_{\Sigma}\leq K$ for all $n\in \mathbb{N}$.
Setting $w_n=\frac{M}{M_n}v_n$, we have that $\int w_n^2=M$ and $$\|v_n-w_n\|_{\Sigma}\leq K|1-\frac{M}{M_n}|.$$
Thus,  following the proof of (\ref{eq 4}), we certainly get:
$$|\mathcal H(w_n)-\mathcal H(v_n)|\leq L(K)K|1-\frac{M}{M_n}|.$$
Consequently, we have:
$$\mu_{M_n}\geq \mathcal H(v_n)-\frac{1}{n}\geq \mathcal H(w_n)-L(K)K|1-\frac{M}{M_n}|-\frac{1}{n},
$$
yielding $\liminf \mu_{M_n}\geq \mu_M$ as desired.
\end{proof}

\begin{proposition}\label{prop 1}
Let $M>0$, and $(M_n)\subset(0,\infty)$ be a sequence of positive real numbers such that $M_n\to M$. Denote by $v_{M_n}$
the unique minimizer of $(V_{M_n})$, and $v_M$ the unique minimizer of $(V_M)$. Then
$$
\mathcal K(v_{M_n})\to \mathcal K(v_M),
$$
and
$$
\mathcal H(v_{M_n})\to \mathcal H(v_M).
$$
\end{proposition}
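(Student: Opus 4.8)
The core idea is to upgrade the weak convergence one gets for free from boundedness into strong convergence in $\Sigma$, and then invoke the continuity of $\mathcal{K}$ and $\mathcal{H}$ on $L^2\cap L^4$ and $\Sigma$ respectively. First, I would recall from Lemma~\ref{lemma 3}(ii) that $\mu_{M_n}=\mathcal{H}(v_{M_n})\to\mu_M=\mathcal{H}(v_M)$; this already gives the second claimed convergence \emph{provided} we know more, but more importantly it gives that $(\mathcal{H}(v_{M_n}))_n$ is bounded. Combining this with the lower bound $\mathcal{H}(u)\geq\tfrac12(\|\nabla u\|_2^2+\|xu\|_2^2)$ and the constraint $\|v_{M_n}\|_2^2=M_n\to M$, I obtain a uniform bound $\|v_{M_n}\|_\Sigma\leq C$ exactly as in \eqref{eq a}. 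Hence, after passing to a subsequence, $v_{M_n}\rightharpoonup v$ weakly in $\Sigma$ for some $v\in\Sigma$, and by Lemma~\ref{lemma 1} the embedding $\Sigma\hookrightarrow L^p$ is compact for $p\in[2,\infty)$, so $v_{M_n}\to v$ strongly in $L^2$ and in $L^4$. In particular $\|v\|_2^2=\lim M_n=M$, so $v$ is admissible for $(V_M)$.

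Next I would show $v$ is in fact the minimizer $v_M$. By weak lower semicontinuity of $\mathcal{H}_0$ and strong $L^4$ convergence, $\mathcal{H}(v)\leq\liminf_n\mathcal{H}(v_{M_n})=\lim_n\mu_{M_n}=\mu_M$; since $v$ is admissible for $(V_M)$ we get $\mathcal{H}(v)=\mu_M$, and by the uniqueness asserted in Lemma~\ref{lemma 2} we conclude $v=v_M$. A standard subsequence argument then shows the full sequence $v_{M_n}\to v_M$ in $L^2$ and $L^4$ (every subsequence has a further subsequence converging to the same limit $v_M$). From the strong $L^2$ and $L^4$ convergence together with $\sigma\in L^\infty$ — so that $\int\sigma|v_{M_n}|^2\to\int\sigma|v_M|^2$ by dominated convergence using the $L^2$ convergence, and $\int|v_{M_n}|^4\to\int|v_M|^4$ directly — we get $\mathcal{K}(v_{M_n})\to\mathcal{K}(v_M)$, which is the first assertion.

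For the $\mathcal{H}$ convergence I would argue that the $L^4$ term converges by the strong $L^4$ convergence, so it remains to handle $\mathcal{H}_0(v_{M_n})=\tfrac12(\|\nabla v_{M_n}\|_2^2+\|xv_{M_n}\|_2^2)$. We already have $\mathcal{H}(v_{M_n})\to\mathcal{H}(v_M)$ from Lemma~\ref{lemma 3}(ii) and $\tfrac14\|v_{M_n}\|_4^4\to\tfrac14\|v_M\|_4^4$, so subtracting gives $\mathcal{H}_0(v_{M_n})\to\mathcal{H}_0(v_M)$ immediately; in fact the second assertion of the Proposition is just a restatement of Lemma~\ref{lemma 3}(ii), which makes this direction essentially free once the identification $v=v_M$ is in place. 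Alternatively one can run a lower-semicontinuity-plus-convergence-of-norms argument to upgrade the weak $\Sigma$ convergence to strong $\Sigma$ convergence, which then yields both statements simultaneously; I would likely present it this way for cleanliness.

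The only genuine subtlety — the ``main obstacle'' — is ensuring the identification of the weak limit $v$ with $v_M$ is airtight, i.e. that no mass escapes to infinity in the $L^2$ constraint. This is precisely what the compact embedding $\Sigma\hookrightarrow L^2$ of Lemma~\ref{lemma 1} rules out (the harmonic confining potential $|x|^2$ is doing the work here), so the argument goes through cleanly; without that compactness one would be stuck with $\|v\|_2^2\leq M$ and a possible strict inequality. Everything else is a routine combination of weak lower semicontinuity, the uniqueness clause of Lemma~\ref{lemma 2}, and the continuity statement of Lemma~\ref{lemma 3}(ii).
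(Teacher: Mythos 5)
Your proposal is correct and follows essentially the same route as the paper: a uniform $\Sigma$-bound from the Hamiltonian lower bound, weak convergence plus the compact embedding of Lemma~\ref{lemma 1} to get strong $L^2\cap L^4$ convergence, identification of the limit with $v_M$ via uniqueness, and then continuity of $\mathcal K$ from $\sigma\in L^\infty$ and the strong convergences, with the $\mathcal H$-convergence reducing to Lemma~\ref{lemma 3}(ii). Your explicit subsequence-to-full-sequence step is a small tidiness improvement the paper leaves implicit.
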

\begin{proof}
We will first prove that there exists $\bar u \in \Sigma$ such that $v_{M_n}$ converges weakly in $\Sigma$ to $\bar u$ $(v_{M_n}\rightharpoonup \bar u\ \ in \ \ \Sigma)$.
First obviously $\|v_{M_n}\|_2^2\leq A$. Now noticing that
$$
\mu_{M_n}=\frac{1}{2}\|\nabla v_{M_n}\|_2^2+\frac{1}{2}\|x v_{M_n}\|_2^2+\frac{1}{4}\| v_{M_n}\|_4^2,
$$
one has
$$
\mu_{M_n}\geq\frac{1}{2}\|\nabla v_{M_n}\|_2^2+\frac{1}{2}\|x v_{M_n}\|_2^2.
$$
Therefore, using (\ref{eq 4}), there exists a constant $B>0$ such that $$\|v_{M_n}\|_{\Sigma}\leq B.$$
Thus, (up to a subsequence), there exists $\bar u\in\Sigma$ such that
$$v_{M_n}\rightharpoonup \bar u\ \ \  in\ \ \Sigma.$$
Now using Lemma \ref{lemma 1}, we have that
$$v_{M_n}\to \bar u\ \ \  in\ \ L^2(\mathbb{R}^2)\cap L^4(\mathbb{R}^2).$$
In particular, $\int\bar u^2=M$. Thus,
$$\mu_M\leq \mathcal H(\bar u)\leq \liminf \mathcal H(v_{M_n})=\liminf\mu_{M_n}$$
and then $\mathcal H(\bar u)=\mu_M$.
This shows that $\bar u$ is the unique minimizer of $(V_M)$. To end the proof, we need to show that
\begin{equation}\label{eq 51}
\int\sigma(x)v_{M_n}^2(x)\to \int\sigma(x)v_{M}^2(x)
\end{equation}
and
\begin{equation}\label{eq 52}
\int v_{M_n}^4(x)\to \int v_{M}^4(x).
\end{equation}
To prove (\ref{eq 51}), it is sufficient to notice that $\sigma\in L^\infty(\mathbb{R}^2)$ and $v_n\to v$ in $L^2(\mathbb{R}^2)$, while (\ref{eq 52}) follows from the fact that
$v_n\to u$ in $L^4(\mathbb{R}^2)$.
\end{proof}

\section{Ground State Solutions}
Always in the case $\varepsilon=0$, and within the class of minimizers $v_M$ we have just constructed, we would like to intersect it with the co-dimension one manifold characterized by the zeros of the functional $\mathcal K$. Before doing so, let us first fix our assumptions on the decay and pumping parameters.\\
First we deal with case $\varepsilon=0$ i.e. the standard nonlinear Schr\"odinger equation in the absence of both the
pumping and dissipation. Equation \eqref{mu SP eps} then becomes

\begin{equation}\label{mu SP eps 0}\tag{$\mu$-${\rm SP}_0$}
\mu Q= (-\Delta+V(x)+|Q|^{2})Q,
\qquad
Q \in \Sigma\setminus\{0\} .
\end{equation}
The first preliminary result is the first iteration. We have the following result:

\begin{proposition}
\label{exis Q0}
There exists a non-negative radial function $Q_0\in \Sigma$ and $\mu_0>2$ solving \eqref{mu SP eps 0}. Moreover, $Q_0$ satisfies
$$
\mathcal K(Q_0)=0.
$$
\end{proposition}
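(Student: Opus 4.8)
The plan is to obtain $(Q_0,\mu_0)$ from the family of minimizers $v_M$ of Lemma~\ref{lemma 2} by tuning the mass $M$ so that $\mathcal K(v_M)=0$. I first observe that for \emph{every} $M>0$ the minimizer already solves \eqref{mu SP eps 0}: since $\mathcal H\in C^1(\Sigma,\mathbb R)$ by Lemma~\ref{lemma 3} and $v_M$ minimizes $\mathcal H$ on the $C^1$ sphere $\{u\in\Sigma:\ \|u\|_2^2=M\}$, the Lagrange multiplier rule gives $\lambda_M\in\mathbb R$ with $-\Delta v_M+Vv_M+v_M^3=\lambda_M v_M$ in $\Sigma^{*}$, and $v_M$ is non-negative and radial by Lemma~\ref{lemma 2}. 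Pairing the equation with $v_M$ gives $\lambda_M M=\|\nabla v_M\|_2^2+\|xv_M\|_2^2+\|v_M\|_4^4\ge 2M+\|v_M\|_4^4>2M$, using the spectral bound $2\|u\|_2^2\le\|\nabla u\|_2^2+\|xu\|_2^2$ and $v_M\not\equiv 0$; hence $\lambda_M>2$ for all $M$. So it remains only to find one mass $M_0$ with $\mathcal K(v_{M_0})=0$, and then set $Q_0:=v_{M_0}$, $\mu_0:=\lambda_{M_0}$.

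Put $g(M):=\mathcal K(v_M)=\int\sigma v_M^2-\alpha\|v_M\|_4^4$; by Proposition~\ref{prop 1} this is continuous on $(0,\infty)$, so by the intermediate value theorem it is enough to show $g>0$ for some small $M$ and $g<0$ for some large $M$. For small $M$, testing $(V_M)$ with $\sqrt M\,\varphi_1$ gives $\mu_M\le M\mathcal H_0(\varphi_1)+\tfrac14 M^2\|\varphi_1\|_4^4$, so $\|v_M\|_\Sigma^2=M+2\mathcal H_0(v_M)\le M+2\mu_M\lesssim M$ for $M\le 1$, and the embedding $\Sigma\hookrightarrow L^4$ of Lemma~\ref{lemma 1} yields $\alpha\|v_M\|_4^4\lesssim M^2$. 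For the pumping term, set $w_M:=M^{-1/2}v_M$: then $\|w_M\|_2=1$ and $\mathcal H_0(w_M)=\mathcal H_0(v_M)/M\le\mu_M/M\to\inf\{\mathcal H_0(u):\|u\|_2=1\}$ (since $\mu_M/M=\inf_{\|u\|_2=1}(\mathcal H_0(u)+\tfrac M4\|u\|_4^4)\downarrow\inf_{\|u\|_2=1}\mathcal H_0$ as $M\downarrow 0$), so $(w_M)$ is a minimizing sequence for the principal eigenvalue of $H_0$; by the compactness of Lemma~\ref{lemma 1} and the simplicity of $\omega_1$ (whose minimizer is the strictly positive $\varphi_1$), $w_M\to\varphi_1$ in $L^2$, and hence $\int\sigma v_M^2=M\int\sigma w_M^2\to M\int\sigma\varphi_1^2$, with $\int\sigma\varphi_1^2>0$ because $\sigma\ge 0$ is continuous and nontrivial and $\varphi_1>0$ everywhere. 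Thus $g(M)\ge\tfrac12 M\int\sigma\varphi_1^2-CM^2>0$ for $M$ small.

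For large $M$, a standard scaling/optimization of $\mathcal H(a\,\phi(\cdot/b))$ over $a,b>0$ (with $\phi\in C_c^\infty$, $\phi\ge 0$ fixed, under $\|a\phi(\cdot/b)\|_2^2=M$) gives the Thomas--Fermi-type upper bound $\mu_M\le CM^{3/2}$. Conversely, since $v_M\ge 0$ with $\|v_M\|_2^2=M$ and $\|xv_M\|_2^2\le 2\mathcal H_0(v_M)\le 2\mu_M$, Chebyshev on $\{|x|>R\}$ with $R^2:=2\|xv_M\|_2^2/M$ forces $\int_{|x|\le R}v_M^2\ge M/2$, and Cauchy--Schwarz on $B_R$ then gives
$$
\|v_M\|_4^4\ \ge\ \frac{(M/2)^2}{\pi R^2}\ =\ \frac{M^3}{8\pi\|xv_M\|_2^2}\ \ge\ \frac{M^3}{16\pi\mu_M}\ \ge\ c\,M^{3/2}.
$$
Since $\int\sigma v_M^2\le\|\sigma\|_\infty M$, this yields $g(M)\le\|\sigma\|_\infty M-\alpha c\,M^{3/2}<0$ for $M$ large (here $\alpha>0$ is used). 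The intermediate value theorem now produces $M_0$ with $\mathcal K(v_{M_0})=0$; then $Q_0=v_{M_0}$ is non-negative and radial, solves \eqref{mu SP eps 0} with $\mu_0=\lambda_{M_0}>2$, and $\mathcal K(Q_0)=0$, as claimed.

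The main obstacle is the large-$M$ sign change, which requires showing that the quartic term grows strictly faster than the mass; the mechanism is to pair the (easy) scaling bound $\mu_M\lesssim M^{3/2}$ with the Chebyshev/Cauchy--Schwarz inequality $\|v_M\|_4^4\gtrsim M^3/\|xv_M\|_2^2$, so that the harmonic confinement supplies the extra factor $M^{1/2}$. A secondary delicate point is the small-$M$ lower bound for $\int\sigma v_M^2$: it does not follow merely from $v_M\not\equiv 0$, and one genuinely uses that the rescaled minimizers converge to the simple, strictly positive ground state $\varphi_1$, which charges the support of the continuous nontrivial $\sigma$.
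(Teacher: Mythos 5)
Your proof is correct and follows the same overall strategy as the paper: run the intermediate value theorem on $M\mapsto\mathcal K(v_M)$ along the family of constrained minimizers from Lemma~\ref{lemma 2}, using the continuity supplied by Proposition~\ref{prop 1}, and read off $\mu_0>2$ from the Euler--Lagrange identity. The differences are in how the two sign conditions are obtained, and in two places your execution is arguably better. For small $M$, the paper bounds $\|v_M\|_{L^4}^4\lesssim M^2$ by Gagliardo--Nirenberg exactly as you do via the embedding, but then simply asserts $\int_{\mathcal O}v_M^2\gtrsim M$ on the set where $\sigma\geq c_0$; this needs a reason why the minimizer cannot concentrate away from $\mathcal O$, and your argument --- showing $M^{-1/2}v_M\to\varphi_1$ in $L^2$ because it is a minimizing family for the simple principal eigenvalue, so that $\int\sigma v_M^2\sim M\int\sigma\varphi_1^2>0$ --- supplies exactly the missing justification. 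For large $M$, the paper establishes $\mathcal H(v_M)\lesssim M^{3/2}$ by explicitly solving the Thomas--Fermi problem ($u^2=(\nu-|x|^2)_+$) and mollifying the resulting profile, whereas you get the same bound from a two-parameter scaling test function; and the paper proves $\|v_M\|_{L^4}^4\gtrsim M^{3/2}$ by contradiction from the interpolation $\|u\|_{L^2}^2\lesssim R^{-2}\|xu\|_{L^2}^2+R\|u\|_{L^4}^2$, which is precisely your Chebyshev-plus-Cauchy--Schwarz inequality run directly rather than by contradiction. So the mechanism (harmonic confinement forces $\|v_M\|_{L^4}^4$ to outgrow $\|\sigma\|_\infty M$) is identical; your route is more elementary and self-contained, while the paper's Thomas--Fermi computation yields the extra structural information that $v_M^2$ approaches the inverted-parabola profile, which is not needed for this proposition.
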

\begin{remark}
$(Q_0,\mu_0)$ will be the first approximate solution in the iteration process to construct the full solution
$(Q_\varepsilon,\mu_\varepsilon)$ of \eqref{mu SP eps}.
\end{remark}
\begin{proof}[Proof of Proposition \ref{exis Q0}]
It is sufficient to prove that the functional $\mathcal K$ changes sign when the mass of the ground state $v_M$ given by Lemma \ref{lemma 2} varies. Then the conclusion will follow using Lemma \ref{lemma 3}. Now, because of the positivity of $v_M$, first observe that for any nontrivial non-negative continuous function $\sigma$, we have $\int_{\mathbb R^2}\sigma |v_M|^2\;dx>0$. Moreover, on the one hand, by the Gagliardo-Nirenberg inequality, there is a constant
$C_*>0$ such that for any $u\in H^1$, we have
$$
\|u\|_{L^4}^4\leq C_*\|\nabla u\|_{L^2}^2\|u\|_{L^2}^2.
$$
On the other hand, multiplying \eqref{mu SP eps 0} by $\bar u$ and integrating shows that any solution $u$ of \eqref{mu SP eps 0} satisfies
$$
\mu\|u\|_{L^2}^2=\|\nabla u\|_{L^2}^2+\|xu\|_{L^2}^2+\|u\|_{L^4}^4.
$$
Thus, if $\|u\|_{L^2}^2=M$ we have
$$
\|u\|_{L^4}^4\lesssim M^2\mu_M.
$$
This shows that when $M\leq 1$, we have $\mu_M\lesssim1$ and thus $\mathcal K(u_M)\geq \int_{\mathbb R^2}\sigma |u|^2\;dx-CM^2$, for some positive constant $C$.
Now since $\sigma\geq0$ is a nontrivial continuous function, there exists a nontrivial open set  $\mathcal O\subset\mathbb R^2$ and a positive constant $c_0>0$ such that $\sigma(x)\geq c_0 $, for all $x\in\mathcal O$. We have $\int_{\mathbb R^2}\sigma |u|^2\;dx\geq c_0 \int_\mathcal O|u|^2\;dx\geq c_1M$, for some small positive constant $c_1$. This implies that $\mathcal K(u_M)\geq 0$ as $M\to0$.
Now, we  need to show that $\mathcal K(u_M)$ becomes negative for large masses.
In fact, first we will  prove  that
\begin{eqnarray}
\label{crucial up bnd}
\mathcal H(u_M) \lesssim M^\frac32,\quad\mbox{as}\quad M\to\infty.
\end{eqnarray}

If we let $\mathcal H_{\text{int}}(u):=\frac12(\|x u\|_{L^2}^2+\frac12\|u\|_{L^4}^4)$, then clearly

$$
\mathcal H_{\text{int}}(u_M)\leq \mathcal H(u_M).
$$
Now, we will explicitly calculate

$$
\nu_M:=\inf _{\|u\|_{L^2}^2=M}\mathcal H_{\text{int}}(u), \ \ u\in\Sigma_{\text{int}},
$$
where $\Sigma_{\text{int}}=\{u\in L^2(\mathbb{R}^2), u\in L^4(\mathbb{R}^2): \int|x|^2u^2<\infty\}$ with the norm
$$\|u\|_{\Sigma_2^4}=\|u\|_2+\|u\|_4+\||x|u\|_2.$$
Let $(u_n)$ be a minimizing sequence of $\nu_M$ that is
\begin{eqnarray}\label{minimiz seq}
\|u_n\|_{L^2}^2=M,\quad\mbox{and} \quad\frac12(\|x u_n\|_{L^2}^2+\frac12\|u_n\|_{L^4}^4)\to\nu_M.
\end{eqnarray}
From the above bounds, let us just denote by $u$ (instead of $u_M$), an $L^2$-weak limit of $(u_n)$. Denote by $f_n:=u_n^2$. First we show that $\|f\|_{L^1(\mathbb R^2)}=M$.  Up to an extraction, we may assume that a subsequence of $(f_n)$ (also denoted by $(f_n)$) converges weakly to $f$ in the sense  of distributions; that is for any $\varphi\in\mathcal C_0^\infty(\mathbb R^2)$
(smooth and compactly supported function), we have
$$
\int_{\mathbb R^2}\varphi f_n\;dx\to \int_{\mathbb R^2}\varphi f\;dx.
$$
To show strong convergence in $L^1$, we observe that (see for example \cite{Florescu})
$$
\limsup_n\|f_n-f\|_{L^1}\leq C(\{f_n,\;n=1,2,\cdot\cdot\}),
$$
where, for any subset $\mathcal A\subset L^1(\mathbb R^2)$, the function $C(\mathcal A)$ introduced by H. P. Rosenthal \cite{Rosenthal} is given by
$$
C(\mathcal A)=\inf_\varepsilon\sup_{|A|<\varepsilon}\sup_n\int_{A}f_n\;dx.
$$
Using H\"older inequality and the above bounds \eqref{minimiz seq}, we have for any $R>0$
\begin{eqnarray*}
\int_{A}f_n\;dx&\leq&\sqrt{|A|}\sqrt{\int_{A}f_n^2\;dx}+\frac1{R^2} \int_{A\cap\{|x|>R\}}|x|^2f_n\;dx\\
&\lesssim&\sqrt{\varepsilon}+\frac1{R^2} ,
\end{eqnarray*}
which clearly shows that $C(\{f_n,\;n=1,2,\cdot\cdot\})=0$, and thus $\|u_n-u\|_{L^2}\to0$ and $\|f\|_{L^1(\mathbb R^2)}=\|u\|_{L^2(\mathbb R^2)}^2=M$, as desired.
Moreover, by the lower semi-continuity of the norms, we have

$$
\frac12(\|x u\|_{L^2}^2+\frac12\|u\|_{L^4}^4)=\frac12(\||x|^2f\|_{L^1}+\frac12\|f^2\|_{L^2}^2)\leq \liminf_n\frac12(\|x u_n\|_{L^2}^2+\frac12\|u_n\|_{L^4}^4)\leq\nu_M.
$$
If the estimate  were  strict that would contradict the minimality of $\nu_M$. The convergence is therefore strong in $v_n\to v$,
and at the minimum we have

$$
|x|^2u+u^3=\nu u,\quad u^2=(\nu-|x|^2)_+
$$
yielding

$$
M=\|u_M\|_{L^2}^2=\int_{\mathbb R^2}(\nu-V)_+\;dx=\int_{\{|x|^2<\nu\}}(\nu-|x|^2)_+\;dx=\frac\pi2\nu^2,
$$
and

$$
\|u_M\|_{L^4}^4=\int_{\mathbb R^2}(\nu-|x|^2)_+|u|^2\;dx=\int_{\mathbb R^2}(\nu-|x|^2)_+^2\;dx\leq\frac\pi3\nu^3\sim M^\frac32.
$$
Now we mollify $v_M$ in order to get an upper bound for $\nu_M$. Set

$$
\tilde{u}_M:=\left((\nu-|x|^2)_+^2+1\right)^\frac14-1,\quad w_M:=\sqrt M\frac{\tilde{u}_M}{\|\tilde u_M\|_{L^2}}.
$$
Calculating $\|\tilde{u}_M\|_{L^2}^2$ shows that
\begin{eqnarray}
\label{key1}
\|\tilde{u}_M\|_{L^2}^2=\int_0^\mu\left((s^2+1)^\frac14-1\right)^2\;ds\sim\mu^2= M\quad\mbox{as}\quad M\to\infty.
\end{eqnarray}
Moreover, similar calculation enables us to see that

\begin{eqnarray}
\label{key2}
\|\nabla\tilde{u}_M\|_{L^2}^2\lesssim \nu^3\quad\mbox{and}\quad \||x|\tilde{u}_M\|_{L^2}^2\lesssim \nu^3.
\end{eqnarray}
In summary, in virtue of \eqref{key1} and \eqref{key2}, we have

\begin{eqnarray}
\label{key3}
\|w_M\|_{L^2}^2=M\quad\mbox{and}\quad \||x| w_M\|_{L^2}^2\lesssim M^\frac32,
\end{eqnarray}
which implies, thanks to the fact that  $\mathcal H(u_M)\leq \mathcal H(w_M)$,
$$
\|u_M\|_{L^2}^2=M,\quad\quad\mbox{and}\quad\quad\|xu_M\|_{L^2}^2\lesssim M^\frac32,\ as\ M\to\infty.
$$
The above estimates automatically imply

\begin{eqnarray}
\label{key}
M^\frac32\lesssim \|u_M\|_{L^4}^4.
\end{eqnarray}
Indeed, if \eqref{key}  does not hold, then there would exist a sequence $M_n\to\infty$, and $(u_n)_n$ satisfying
$$
\|u_n\|_{L^2}^2=M_n\quad\mbox{and}\quad \||x| u_n\|_{L^2}^2\lesssim M_n^\frac32
$$
and
$$
\|u_n\|_{L^4}^4\leq\frac{M_n^\frac32}n.
$$
On the other hand, for all $R>0$ and $n\in\mathbb N$
\begin{eqnarray*}
\|u_n\|_{L^2}^2&\lesssim&\frac{M_n^\frac32}{R^2}+R\|u_n\|_{L^4}^2\\
&\lesssim&\frac{M_n^\frac32}{R^2}+R\frac{M_n^\frac34}{n^\frac12}.
\end{eqnarray*}
Now choosing $R=M_n^\frac14n^\frac18$, gives the bound
$$
1\lesssim \frac1{n^\frac14}
$$
leading to a contradiction by taking $n\to\infty$.
Clearly, \eqref{key} shows that $\mathcal K(u_M)$ becomes negative as $M\to\infty$ which finishes the proof.
\end{proof}
Notice that to construct a nonlinear solution to \eqref{mu SP eps 0},
one can use several techniques. Variationnally, for any given amount of mass $M>0$, we have shown that a radial positive solution $(u_M,\mu_M)$ to \eqref{mu SP eps 0} can be constructed through the following minimizing problem
$$
\mu_M=\mathcal H(u_M):=\min _{\|u\|_{L^2}^2=M}\mathcal H(u).
$$
Moreover, this family of solutions is included in the branch of solutions   constructed using bifurcation arguments pioneered by Rabinowitz, and Crandall-Rabinowitz \cite{CR}.
Indeed, $(u,\mu)$ is a solution to \eqref{mu SP eps 0} if and only if $(I-\mu K)u=\mathcal N(u)$, where $K=A^{-1}B$,
$\mathcal N=A^{-1}G'(u)$, and the operators $A$, $B$ and $G $ are defined by

$$
A:\Sigma\to\Sigma^*,\quad \mbox{for any}\quad u,v \in\Sigma; \;<Au,v>:=(\nabla u,\nabla v)_2+(x u,x v)_2,
$$

$$
B:\Sigma\to\Sigma^*,\quad \mbox{for any}\quad u,v \in\Sigma; \;<Bu,v>:=(u,v)_2,
$$
and
$$
G:\Sigma\to\mathbb R,\quad \mbox{for any}\quad u \in\Sigma; \;G(u)=-\frac14\|u\|_{L^4}^4.
$$
Indeed, the following proposition shows that a branch of solutions of \eqref{mu SP eps 0} emerging from
the linear solution $(\varphi_1,\omega_1)$ can be constructed. The proof of the proposition is included in the proof of the spectral assumption given in the Appendix.  (See section 5). 
	\begin{proposition}
		\label{GS via bif}
		There exists $\eta_0>0$ such that for all $0<\eta<\eta_0$, a unique solution $u(\eta)\in\Sigma$, $\mu(\eta)>2$ of \eqref{mu SP eps 0}
		exists such that
		$$
		u(\eta)=\sqrt\eta (a(\eta)\varphi_1+ z(\eta)),
		$$
		with $z\in\Sigma$, $z(0)=0$ and $(z(\eta),\varphi_1)_2=0$.
	\end{proposition}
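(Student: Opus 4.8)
The plan is a bifurcation–from–a–simple–eigenvalue argument, in the spirit of Crandall--Rabinowitz \cite{CR} (equivalently, a Lyapunov--Schmidt reduction), applied to the fixed–point reformulation $(I-\mu K)u=\mathcal N(u)$ recorded above, with $K=A^{-1}B$ and $\mathcal N(u)=A^{-1}G'(u)=-A^{-1}(u^3)$. We seek the real branch continuing $(\varphi_1,\omega_1)$, so we work in the real Banach space $\Sigma_{\mathbb R}$ of real–valued functions of $\Sigma$, endowed with the inner product $(u,v)_\ast:=\langle Au,v\rangle=(\nabla u,\nabla v)_2+(xu,xv)_2$, which is equivalent to $(\cdot,\cdot)_\Sigma$ by the spectral gap $2\|u\|_2^2\le\|\nabla u\|_2^2+\|xu\|_2^2$ quoted in the introduction.

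First I would verify the structural hypotheses. The operator $K:\Sigma_{\mathbb R}\to\Sigma_{\mathbb R}$ is self–adjoint for $(\cdot,\cdot)_\ast$, since $(Ku,v)_\ast=\langle Bu,v\rangle=(u,v)_2$ is symmetric, and compact, because $B$ factors through the embeddings $\Sigma\hookrightarrow L^2\hookrightarrow\Sigma^\ast$, the first of which is compact by Lemma~\ref{lemma 1}. A nonzero $\lambda$ is an eigenvalue of $K$ iff $1/\lambda$ is one of $H_0=-\Delta+|x|^2$, so the top eigenvalue of $K$ is $1/\omega_1=\tfrac12$, it is simple, and its eigenspace is $\mathbb R\varphi_1$ (this is the harmonic–oscillator spectral data recalled from \cite{KavianWeissler}). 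Moreover $\mathcal N(u)=-A^{-1}(u^3)$ is smooth from $\Sigma_{\mathbb R}$ to $\Sigma_{\mathbb R}$ (indeed real–analytic), with $\mathcal N(0)=0$ and $D\mathcal N(0)=0$: the cube map is a smooth Nemytskii operator $L^6\to L^2\hookrightarrow\Sigma^\ast$ and $\Sigma\hookrightarrow L^6$ in dimension two. Setting $F(\mu,u):=(I-\mu K)u-\mathcal N(u)$, we get $F(\mu,0)\equiv0$ and $L_0:=D_uF(\omega_1,0)=I-\omega_1K$, whose kernel is $\mathbb R\varphi_1$ and whose range is the closed codimension–one subspace $\{v:(v,\varphi_1)_\ast=0\}$; the transversality condition $D_\mu D_uF(\omega_1,0)\varphi_1=-K\varphi_1=-\tfrac12\varphi_1\notin\mathrm{Range}\,L_0$ holds because $(\varphi_1,\varphi_1)_\ast>0$. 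Crandall--Rabinowitz then yields $\delta>0$ and a smooth curve $s\mapsto(\mu(s),u(s))$, $|s|<\delta$, with $\mu(0)=\omega_1=2$, $u(s)=s\varphi_1+s\,\psi(s)$, $(\psi(s),\varphi_1)_2=0$, $\psi(0)=0$, and such that near $(2,0)$ the only zeros of $F$ are the trivial line and this curve.

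It remains to put the curve in the stated form. Because the nonlinearity is odd we have $F(\mu,-u)=-F(\mu,u)$, so the local uniqueness above forces $\mu(-s)=\mu(s)$ and $\psi(-s)=\psi(s)$: the bifurcation is a pitchfork and $\mu,\psi$ are even in $s$. Carrying out the Lyapunov--Schmidt reduction behind the theorem—solve the $\varphi_1^\perp$–projection for $w=s\psi(s)=O(s^3)$ by the implicit function theorem (using that $H_0-\mu$ is invertible on $\varphi_1^\perp$ for $\mu$ near $2$), then substitute into the $\varphi_1$–projection—gives the scalar equation $(\omega_1-\mu)s+(u(s)^3,\varphi_1)_2=0$, whence for $s\ne0$
\[
\mu(s)=2+s^2\|\varphi_1\|_{L^4}^4+O(s^4)>2
\]
after shrinking $\delta$, since $\|\varphi_1\|_{L^4}^4>0$. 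Now set $\eta:=s^2$ (one could equally use $\eta:=\mu-2$ or $\eta:=\|u\|_{L^2}^2$, all related by invertible smooth changes of variable near $0$); since $\mu$ and $\psi$ depend on $\eta$ alone, writing $u(s)=\sqrt\eta(\varphi_1+\psi(\sqrt\eta))=\sqrt\eta(a(\eta)\varphi_1+z(\eta))$ with $a\equiv1$ and $z(\eta):=\psi(\sqrt\eta)$ gives the asserted form, with $z\in\Sigma$, $z(0)=0$, $(z(\eta),\varphi_1)_2=0$; and uniqueness of $u(\eta)$ for $\eta\in(0,\eta_0)$, $\eta_0:=\delta^2$, is exactly the local uniqueness from Crandall--Rabinowitz.

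I expect the only real content to be the verification that $K$ is compact and self–adjoint with $\tfrac12$ a \emph{simple} top eigenvalue—this is what makes the simple–eigenvalue hypothesis available, and it rests on the compact embedding of Lemma~\ref{lemma 1} together with the explicit spectrum of the harmonic oscillator—and the extraction of the sign of the leading coefficient of $\mu(\eta)-2$ from the reduction, which pins down $\mu(\eta)>2$. The pitchfork structure needed for the $\sqrt\eta$–scaling is automatic from the oddness of $|u|^2u$, and the remaining points (smoothness of $\mathcal N$, invertibility of $H_0-\mu$ off $\varphi_1$, the $O(s^3)$ bound on $w$) are routine.
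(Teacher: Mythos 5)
Your proposal is correct and follows essentially the same route as the paper's Appendix proof: a Lyapunov--Schmidt reduction at the simple eigenvalue $\omega_1=2$ of the harmonic oscillator, using the invertibility of $H_0-2$ on $\varphi_1^\perp$, the implicit function theorem for the orthogonal component, and the sign of the cubic coefficient $\|\varphi_1\|_{L^4}^4>0$ in the scalar bifurcation equation to get $\mu>2$. The only difference is cosmetic: the paper takes $\eta=\mu-2$ as the small parameter, prescales $u=\sqrt\eta\,q$, and solves the reduced equation for the amplitude $a(\eta)$ near $a_0=\|\varphi_1\|_{L^2}/\|\varphi_1\|_{L^4}^{2}$, whereas you parametrize by the amplitude $s$ via Crandall--Rabinowitz and then set $\eta=s^2$, which amounts to an invertible change of parameter.
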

For the solution $(Q_0,\mu_0)$  to \eqref{mu SP eps 0} satisfying $\mathcal K(Q_0)=0$ given by Proposition \ref{exis Q0}, denote by
$$
L_-:=-\Delta+V+Q_0^{2}-\mu_0,
$$
and

$$
L_+:=-\Delta+V+3Q_0^{2}-\mu_0.
$$

The second preliminary result concerns the operators $L_\pm$. We have the following important property of $L_\pm$.

\begin{proposition}
	\label{kernal L-}
Let $<Q_0>^\bot$ be the subspace of $\Sigma$ consisting of all functions $L^2$-orthogonal to $Q_0$. Then we have
\begin{eqnarray*}
ker(L_-)=\{Q_0\}, \quad\mbox{and}\quad  L_-:<Q_0>^\bot\to<Q_0>^\bot\quad\mbox{is bijective}.
\end{eqnarray*}
Moreover, there exists $\alpha_0>0$ such that for all $\alpha>\alpha_0$,
\begin{eqnarray*}
L_+:\Sigma\to\Sigma^* \quad\mbox{is bijective}.
\end{eqnarray*}
\end{proposition}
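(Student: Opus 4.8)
The plan is to deduce both assertions from the single structural fact that $Q_0$ is a \emph{strictly positive ground state} of $-\Delta+V+Q_0^2$: then $L_-$ is a non-negative Schr\"{o}dinger operator whose kernel is exactly the ground-state line, and the statement about $L_+$ comes essentially for free from the identity $L_+=L_-+2Q_0^2$. First I would set up the framework. Since $V(x)=|x|^2\to\infty$ and $Q_0^2\geq0$, the potentials of $L_\pm$ are bounded below and tend to $+\infty$, so $L_-$ and $L_+$ are self-adjoint on $L^2(\mathbb R^2)$ with compact resolvent and purely discrete spectrum; equivalently, as maps $\Sigma\to\Sigma^*$ they are compact perturbations of the isomorphism $H_0$ (multiplication by $Q_0^2$ and by a constant are compact from $\Sigma$ to $\Sigma^*$ by the compact embeddings $\Sigma\hookrightarrow L^2\cap L^4$ of Lemma \ref{lemma 1}), hence Fredholm of index $0$. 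Next I would identify $\ker L_-$: since $Q_0$ solves \eqref{mu SP eps 0} we have $L_-Q_0=0$, so $\langle Q_0\rangle\subseteq\ker L_-$; conversely $Q_0\geq0$ is a nontrivial solution of the second-order elliptic equation $(-\Delta+V+Q_0^2-\mu_0)Q_0=0$, hence $Q_0>0$ on $\mathbb R^2$ by the strong maximum principle, and a strictly positive $L^2$-eigenfunction of a Schr\"{o}dinger operator necessarily belongs to its lowest eigenvalue, which is simple. Therefore $0=\inf\,\mathrm{spec}(L_-)$ is a simple eigenvalue, $\ker L_-=\langle Q_0\rangle$, and $L_-\geq0$ as a quadratic form.

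The bijectivity of $L_-$ on $\langle Q_0\rangle^\bot$ is then standard self-adjoint Fredholm theory: by self-adjointness the range of $L_-$ lies in $\ker(L_-)^\bot=\langle Q_0\rangle^\bot$, it is closed because $0$ is an isolated point of the spectrum, and hence it equals $\langle Q_0\rangle^\bot$; injectivity on $\langle Q_0\rangle^\bot$ holds since $\ker L_-\cap\langle Q_0\rangle^\bot=\{0\}$. Quantitatively, with $\lambda_2>0$ the second eigenvalue one has $\langle L_-u,u\rangle\geq\lambda_2\|u\|_2^2$ for $u\bot Q_0$, which combined with the coercivity $\langle (L_-+c)u,u\rangle\gtrsim\|u\|_\Sigma^2$ produces the bounded inverse of $L_-$ on $\langle Q_0\rangle^\bot$.

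For $L_+$ I would use $L_+=L_-+2Q_0^2$: for every $u\in\Sigma$,
$$
\langle L_+u,u\rangle=\langle L_-u,u\rangle+2\int_{\mathbb R^2}Q_0^2\,u^2\,dx\geq0 ,
$$
and equality forces both $\langle L_-u,u\rangle=0$, i.e. $u\in\ker L_-=\langle Q_0\rangle$ since $L_-\geq0$, and $\int_{\mathbb R^2}Q_0^2u^2\,dx=0$, i.e. $u=0$ since $Q_0>0$. Hence $L_+$ is strictly positive, in particular injective, and being Fredholm of index $0$ from $\Sigma$ to $\Sigma^*$ it is therefore bijective, with bounded inverse $L_+^{-1}\colon\Sigma^*\to\Sigma$. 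Since this uses only that $Q_0$ is a positive ground state, it holds for every admissible $\alpha$, in particular for all $\alpha>\alpha_0$.

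The only step that is not purely formal is the identification $\ker L_-=\langle Q_0\rangle$, i.e. upgrading the non-negativity of $Q_0$ (coming from the rearrangement inequalities used in Lemma \ref{lemma 2}) to strict positivity and pinning $0$ as the simple bottom eigenvalue of $L_-$; once this is in hand, the statement for $L_-$ on $\langle Q_0\rangle^\bot$ and the bijectivity of $L_+$ are immediate. I also note that the hypothesis $\alpha\gg1$ is not actually needed for this proposition; it enters only later, in the construction of $(Q_\varepsilon,\mu_\varepsilon)$.
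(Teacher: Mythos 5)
Your treatment of $L_-$ is essentially the paper's: both arguments reduce to the fact that $Q_0>0$ is a ground state, so that $0=\inf\mathrm{spec}(L_-)$ is a simple lowest eigenvalue (the paper sets up the minimization $\ell_{\mu_0}=\inf\{\langle L_-v,v\rangle:\|v\|_{L^2}=1\}$ and invokes Theorem 11.8 of Lieb--Loss; you invoke the same positivity/simplicity principle directly), after which bijectivity on $\langle Q_0\rangle^{\bot}$ is routine self-adjoint Fredholm theory. For $L_+$, however, you take a genuinely different and much shorter route. The paper proves invertibility of $L_+$ only perturbatively: it constructs the small-mass branch $u(\eta)=\sqrt{\eta}(a\varphi_1+z)$ by bifurcation from $(\varphi_1,2)$, tracks how the zero eigenvalue of $H_0-2$ moves under the perturbation (computing $d\lambda/d\eta\neq0$ at $\eta=0$), identifies the variational minimizer $Q_M$ with this branch for $M$ small, and finally chooses $\alpha$ large so that the mass at which $\mathcal K(Q_M)=0$ falls into this perturbative window --- this is exactly where the hypothesis $\alpha>\alpha_0$ is used. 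Your argument instead exploits the defocusing sign of the nonlinearity through the identity $L_+=L_-+2Q_0^2$: since $L_-\geq0$ by part (i) and $Q_0>0$, the quadratic form $\langle L_+u,u\rangle$ vanishes only at $u=0$, and injectivity together with the Fredholm index-zero structure (equivalently, coercivity, since the lowest eigenvalue of $L_+$ is attained and hence strictly positive) yields bijectivity from $\Sigma$ onto $\Sigma^*$. This is correct, it buys invertibility of $L_+$ for \emph{every} admissible $\alpha$ rather than only $\alpha>\alpha_0$, and it is consistent with what the paper's perturbative computation should give in the defocusing case (the zero eigenvalue of $L_+$ moves upward; note that \eqref{5.5} carries the term $-3(u(\eta))^2$, a sign slip relative to the definition of $L_+$ in Section 3, which is why the appendix finds $d\lambda/d\eta<0$). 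Your closing observation that the largeness of $\alpha$ is not needed for this proposition is therefore justified by your proof, even though it is essential to the paper's own argument.
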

The  property of $L_+$ comes from the breakdown of the spatial translation symmetry due to the presence of the potential. We refer to the Appendix (Section 5)  for the proof of
proposition \ref{kernal L-} .

We have
\begin{eqnarray}
\label{L Q 0}
 L_+(Q_0)=2Q_0^3.
\end{eqnarray}
Since $\mathcal K(Q_0)=(Q_0,(\sigma-\alpha|Q_0|^{2}) Q_0)_2=0$,
then thanks to Proposition \ref{kernal L-}, one can uniquely define $Q_{1i}$ by

$$
L_-Q_{1i}:=(\alpha|Q_0|^{2}-\sigma) Q_0.
$$
Observe that given the smoothness and the decay of $Q_0$, we have $Q_{1i}\in\text{Dom}L_-$. Moreover, we have
\begin{eqnarray}
\label{L Q i}
L_+^{-1}:L^2\to \mbox{Dom}(L_+)\quad \mbox{is bounded, and }\quad L_+(Q_{1i})=\alpha Q_0^3+2Q_0^2Q_{1i}-\sigma Q_0.
\end{eqnarray}
Now, define $Q_{2r}$ and $Q_{3i}$ by
\begin{equation}\label{qr}
L_+Q_{2r}=\mu_2Q_0+(\sigma-\alpha|Q_0|^{2}) Q_{1i}-Q_0Q_{1i}^2,
\end{equation}
and

\begin{equation}\label{qi3}
L_-Q_{3i}=(2Q_{2r}Q_0-Q_{1i}^2)Q_{1i}+\mu_2Q_{1i}+((2+\alpha)Q_0^2-\sigma)Q_{2r}+Q_{1i}^2Q_0.
\end{equation}
The bijectivity of $L_+$ enables us to determine $Q_{2r}$, and again the regularity of $Q_0$ shows that $Q_{2r}\in\text{Dom}L_+$. Thus it only remains to determine
the coefficient $\mu_2$, and $Q_{3i}$. They are  determined  by the orthogonality condition

$$
(L_-Q_{3i},Q_0)_2=0.
$$
Indeed, substituting $Q_{2r}$ (given by inverting \eqref{qr}) into \eqref{qi3} gives

\begin{eqnarray}
L_-Q_{3i}&=&\mu_2[ Q_{1i}+ ((2+\alpha)Q_0^2-\sigma+2Q_0Q_{1i})  L_+^{-1}Q_0 ]+
Q_{1i}^2Q_0-Q_{1i}^3\\
&+&((2+\alpha)Q_0^2-\sigma+2Q_0Q_{1i})L_+^{-1}\big((\sigma-Q_0^2)Q_{1i}-Q_0Q_{1i}^2\big).
\end{eqnarray}
Now since
$(Q_0,Q_{1i})_2=0$, then clearly

$$
(L_+^{-1}((2+\alpha)Q_0^2-\sigma+2Q_0Q_{1i}),Q_0)=\|Q_0\|_{L^2}^2\neq0,
$$
which insures that $\mu_2$ is uniquely determined in terms of $Q_0$, $Q_{1,i}$ which were already defined. Then $Q_{3i}$ follows by inverting $L_-$ using the orthogonality
$(Q_{3i},Q_0)_2=0$. Now, set

\begin{eqnarray}
 \label{prox sol}
Q_\varepsilon^a:=Q_0+i\varepsilon Q_{1i}+\varepsilon^2 Q_{2r}+i\varepsilon^3 Q_{3i},
\quad\mbox{and}\quad \mu_\varepsilon^a=\mu_0+\varepsilon^2\mu_2.
\end{eqnarray}
The main result of this section is the following.
\begin{theorem}\label{ground state}
For $\sigma$ a heaviside function and $\alpha>\alpha_0$, there exists $\varepsilon_0>0$ such that for all $0<\varepsilon<\varepsilon_0$, equation \eqref{NLS eps} 
has a solution
$(Q_\varepsilon,\mu_\varepsilon)\in \Sigma\times (2,\infty)$
that can be decomposed as
\begin{eqnarray}
(Q_\varepsilon,\mu_\varepsilon)=(Q_\varepsilon^a+\psi_\varepsilon,\mu_\varepsilon^a+\kappa_\varepsilon),
\end{eqnarray}
with $\psi_\varepsilon=\psi_{\varepsilon, r}+i\psi_{\varepsilon,i}$ satisfying
\begin{eqnarray}
 |\kappa_\varepsilon|+\|\psi_{\varepsilon,r}\|_{\Sigma}&\lesssim& \varepsilon^4\\
\|\psi_{\varepsilon,i}\|_{\Sigma}&\lesssim& \varepsilon^5.
\end{eqnarray}

\end{theorem}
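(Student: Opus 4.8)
The plan is to run a Lyapunov--Schmidt reduction around the approximate solution $(Q_\varepsilon^a,\mu_\varepsilon^a)$ of \eqref{prox sol}, exploiting the invertibility of $L_\pm$ from Proposition \ref{kernal L-}, followed by a contraction mapping argument. First I would record the residual: inserting $(Q_\varepsilon^a,\mu_\varepsilon^a)$ into \eqref{mu SP eps} and splitting into real and imaginary parts, one notes that $\mathrm{Re}\,Q_\varepsilon^a=Q_0+\varepsilon^2Q_{2r}$ involves only even powers of $\varepsilon$ while $\mathrm{Im}\,Q_\varepsilon^a=\varepsilon Q_{1i}+\varepsilon^3Q_{3i}$ only odd powers, so the real part of the equation expands in even powers and the imaginary part in odd powers. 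The orders $\varepsilon^0,\varepsilon^2$ of the real part cancel by \eqref{mu SP eps 0} and \eqref{qr}, and the orders $\varepsilon^1,\varepsilon^3$ of the imaginary part cancel by the definition of $Q_{1i}$, by \eqref{qi3}, and by the choice of $\mu_2$; hence the real residual is $\mathcal R_r^\varepsilon=O(\varepsilon^4)$ and the imaginary residual $\mathcal R_i^\varepsilon=O(\varepsilon^5)$, both in $\Sigma^*$ (in fact in $L^2$). This step uses only the smoothness and fast decay of $Q_0$ --- hence of each corrector --- already exploited in their construction, that $\sigma\in L^\infty$, and the product bound $\|fg\|_{L^2}\le\|f\|_{L^4}\|g\|_{L^4}\lesssim\|f\|_\Sigma\|g\|_\Sigma$ from Lemma \ref{lemma 1}.

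Next I would set up the reduced system. Writing $Q_\varepsilon=Q_\varepsilon^a+\psi_r+i\psi_i$, $\mu_\varepsilon=\mu_\varepsilon^a+\kappa$, separating \eqref{mu SP eps} into real and imaginary parts and expanding about $\psi_r=\psi_i=0$, $\kappa=0$, I obtain a system of the form
\begin{align*}
L_+\psi_r&=\kappa\,Q_0+\mathcal R_r^\varepsilon+\varepsilon\,\Lambda_r\psi_i+\mathcal N_r(\psi_r,\psi_i,\kappa;\varepsilon),\\
L_-\psi_i&=\varepsilon\,\kappa\,Q_{1i}+\mathcal R_i^\varepsilon+\varepsilon\,\Lambda_i\psi_r+\mathcal N_i(\psi_r,\psi_i,\kappa;\varepsilon),
\end{align*}
where $\Lambda_r,\Lambda_i$ are bounded multiplication operators coming from the pumping/damping term $i\varepsilon(\sigma-\alpha|Q_\varepsilon|^2)Q_\varepsilon$ and from the cross terms in $|Q_\varepsilon|^2$ (their coefficients --- e.g. $3\alpha Q_0^2-\sigma-2Q_0Q_{1i}$ for $\Lambda_i$, up to sign --- lie in $L^\infty$ since $\sigma\in L^\infty$ and $Q_0\in L^\infty$), the $O(\varepsilon^2)$ discrepancy between linearising at $Q_\varepsilon^a$ and at $Q_0$ having been absorbed into them, and $\mathcal N_r,\mathcal N_i$ are at least quadratic in $(\psi_r,\psi_i,\kappa)$ with $\varepsilon$-uniformly bounded $L^\infty$ coefficients. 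I would look for $\psi_r\in\Sigma$, $\psi_i\in\langle Q_0\rangle^\bot$ and $\kappa\in\mathbb R$, normalising the remaining freedom --- the $Q_0$-component of $\psi_i$ --- to zero, which is legitimate by the phase invariance $Q\mapsto e^{i\theta}Q$ of \eqref{mu SP eps}.

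Then I would resolve and iterate. Since $L_+:\Sigma\to\Sigma^*$ is bijective (Proposition \ref{kernal L-}), the first equation gives $\psi_r=\kappa\,L_+^{-1}Q_0+L_+^{-1}\mathcal R_r^\varepsilon+\varepsilon\,L_+^{-1}\Lambda_r\psi_i+L_+^{-1}\mathcal N_r$. Substituting into the second equation and projecting onto $Q_0$, the term $\varepsilon\kappa Q_{1i}$ disappears because $(Q_{1i},Q_0)_2=0$, so the Fredholm solvability condition for $L_-$ (kernel $\langle Q_0\rangle$) reduces to a scalar equation $\varepsilon\,c(\varepsilon)\,\kappa=-(\mathcal R_i^\varepsilon,Q_0)_2-\varepsilon\,(\Lambda_i L_+^{-1}\mathcal R_r^\varepsilon,Q_0)_2+(\text{higher order})$, with $c(0)=(\Lambda_i L_+^{-1}Q_0,Q_0)_2$. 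Granted $c(0)\neq0$, this determines $\kappa$ with $|\kappa|\lesssim\varepsilon^{-1}\|\mathcal R_i^\varepsilon\|_{\Sigma^*}\lesssim\varepsilon^4$, and then $\|\psi_r\|_\Sigma\lesssim|\kappa|+\|\mathcal R_r^\varepsilon\|_{\Sigma^*}\lesssim\varepsilon^4$; feeding these back, the $\langle Q_0\rangle^\bot$-projection of the second equation reads $\psi_i=L_-^{-1}P^\bot[\mathcal R_i^\varepsilon+\varepsilon\kappa Q_{1i}+\varepsilon\Lambda_i\psi_r+\mathcal N_i]$ with $L_-^{-1}$ bounded on $\langle Q_0\rangle^\bot$, every term on the right being $O(\varepsilon^5)$, so $\|\psi_i\|_\Sigma\lesssim\varepsilon^5$. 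Running these three solved equations as a map on the set $\{\|\psi_r\|_\Sigma+|\kappa|\le C\varepsilon^4,\ \|\psi_i\|_\Sigma\le C\varepsilon^5\}$ with the weighted norm $\varepsilon^{-4}(\|\psi_r\|_\Sigma+|\kappa|)+\varepsilon^{-5}\|\psi_i\|_\Sigma$, the higher order of $\mathcal N_r,\mathcal N_i$ makes it a self-map and a contraction for $\varepsilon$ small, and Banach's theorem produces $(\psi_\varepsilon,\kappa_\varepsilon)$ with the stated bounds; finally $\mu_\varepsilon=\mu_0+\varepsilon^2\mu_2+\kappa_\varepsilon>2$ for $\varepsilon$ small since $\mu_0>2$ (Proposition \ref{exis Q0}), and $Q_\varepsilon\in\Sigma\setminus\{0\}$ as $Q_\varepsilon\to Q_0\neq0$.

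The main obstacle is the nondegeneracy $c(0)=(\Lambda_i L_+^{-1}Q_0,Q_0)_2\neq0$, which is precisely what makes $\kappa$ genuinely determined with size $\varepsilon^4$ rather than free; verifying it is the analogue of the computation pinning down $\mu_2$ after \eqref{qi3}, and is the second place (besides the invertibility of $L_+$) where the hypotheses $\alpha>\alpha_0$ and $\sigma$ Heaviside enter --- the constraint $\mathcal K(Q_0)=0$ produces a cancellation that leaves $c(0)$ bounded away from $0$ as $\alpha\to\infty$. A secondary, essentially bookkeeping difficulty is tracking the two distinct scales $\varepsilon^4$ (for $\psi_r$ and $\kappa$) and $\varepsilon^5$ (for $\psi_i$) consistently through the iteration, which forces the weighted norm above; and the residual estimates of Step~1, though routine, require care since $\sigma$ is merely bounded, so all regularity and decay must be drawn from $Q_0$ itself.
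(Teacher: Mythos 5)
Your proposal follows essentially the same route as the paper: a perturbative expansion around $(Q_\varepsilon^a,\mu_\varepsilon^a)$ with residuals of size $\varepsilon^4$ (real part) and $\varepsilon^5$ (imaginary part), inversion of $L_+$ to express $\psi_{\varepsilon,r}$ in terms of $\kappa_\varepsilon$, determination of $\kappa_\varepsilon$ from the solvability condition $(L_-\psi_{\varepsilon,i},Q_0)_2=0$ (using $(Q_{1i},Q_0)_2=0$ and the nondegeneracy of the same pairing that fixes $\mu_2$), inversion of $L_-$ on $\langle Q_0\rangle^{\bot}$, and a contraction on a ball with the weighted norm separating the $\varepsilon^4$ and $\varepsilon^5$ scales. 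This matches the paper's argument step for step, including the identification of the nondegeneracy constant as the key point.
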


\begin{proof}[Proof of Theorem \ref{ground state}]
First, we write an equation for $(Q_\varepsilon,\mu_\varepsilon)$ being a solution of  \eqref{mu SP eps}. We start by further decomposing $Q_\varepsilon^a=Q_{\varepsilon, r}^a+iQ_{\varepsilon,i}^a$ and observe that
$$
|Q_\varepsilon|^2=|Q_{\varepsilon, r}^a|^2+|Q_{\varepsilon, i}^a|^2
+2Q_{\varepsilon, r}^a\psi_{\varepsilon, r}+2Q_{\varepsilon, i}^a\psi_{\varepsilon, i}
+|\psi_{\varepsilon, r}|^2+|\psi_{\varepsilon, i}|^2.
$$
Substituting this in equation \eqref{mu SP eps} and splitting the real and imaginary parts, we obtain
\begin{eqnarray}\nonumber
(\mu_\varepsilon^a+\kappa_\varepsilon)(Q_{\varepsilon, r}^a+\psi_{\varepsilon, r})&=
(-\Delta+V+|Q_\varepsilon|^2)(Q_{\varepsilon, r}^a+\psi_{\varepsilon, r})\\&-
\varepsilon(\sigma-\alpha|Q_\varepsilon|^2)(Q_{\varepsilon, i}^a+\psi_{\varepsilon, i}),
\label{real part}
\end{eqnarray}
and
\begin{eqnarray}\nonumber
 (\mu_\varepsilon^a+\kappa_\varepsilon)(Q_{\varepsilon, i}^a+\psi_{\varepsilon, i})&=
(-\Delta+V+|Q_\varepsilon|^2)(Q_{\varepsilon, i}^a+\psi_{\varepsilon, i})\\&+
\varepsilon(\sigma-\alpha|Q_\varepsilon|^2)(Q_{\varepsilon, r}^a+\psi_{\varepsilon, r}),
\label{imag part}
\end{eqnarray}
respectively.
The identity coming from the real part can be rewritten in the following way.
\begin{eqnarray*}\nonumber
L_+\psi_{\varepsilon, r}&=&\mu_{\varepsilon}^aQ_{\varepsilon, r}^a-(-\Delta+V+|Q_\varepsilon^a|^2)\psi_{\varepsilon, r}+
\varepsilon(\sigma-\alpha|Q_\varepsilon^a|^2)Q_{\varepsilon, i}^a\\
&+&\kappa_{\varepsilon}Q_{\varepsilon, r}^a+\varepsilon^2\mu_2\psi_{\varepsilon, r}-
2Q_{\varepsilon, i}^aQ_{\varepsilon, r}^a\psi_{\varepsilon, r}+  \varepsilon(\sigma-\alpha|Q_\varepsilon^a|^2)\psi_{\varepsilon, i}^a \\
&-&2|Q_{\varepsilon, r}^i|^2\psi_{\varepsilon, i}^a+ \kappa_\varepsilon\psi_{\varepsilon, r}+
\psi_{\varepsilon, r}(2Q_{\varepsilon, r}^a\psi_{\varepsilon, r}+2Q_{\varepsilon, i}^a\psi_{\varepsilon, i}+\psi_{\varepsilon, r}^2+\psi_{\varepsilon, i}^2)\\
&-&\varepsilon\psi_{\varepsilon, r}(2Q_{\varepsilon, r}^a\psi_{\varepsilon, r}+2Q_{\varepsilon, i}^a\psi_{\varepsilon, i}+\psi_{\varepsilon, r}^2+\psi_{\varepsilon, i}^2)\\
&:=&\kappa_\varepsilon Q_0+\varepsilon^4g_1+F_\varepsilon(\psi_{\varepsilon, r},\psi_{\varepsilon, i},\kappa_\varepsilon)
\end{eqnarray*}
where $g_1$ is given by
$$
g_1:= \mu_2Q_{2r}-Q_{1i}^2Q_{2r}-(Q_{2r}^2+2Q_{3i}Q_{1i})Q_{0}+(\sigma-\alpha Q_0^2)Q_{3i}-(2Q_0Q_{2r}+Q_{1i}^2)Q_{1i}
$$
and $F_\varepsilon$ can be  explicitly computed. In particular it satisfies
$$
\|F_\varepsilon(\psi_{\varepsilon, r},\psi_{\varepsilon, i},\kappa_\varepsilon)\|_{\Sigma}\lesssim \varepsilon^6.
$$
The identity coming from the imaginary part can be rewritten in the following way.
\begin{eqnarray*}\nonumber
L_-\psi_{\varepsilon, i}&=&\mu_{\varepsilon}^aQ_{\varepsilon, i}^a-(-\Delta+V+|Q_\varepsilon^a|^2)Q_{\varepsilon, i}^a-
\varepsilon(\sigma-\alpha|Q_\varepsilon^a|^2)Q_{\varepsilon, r}^a\\
&+&\kappa_{\varepsilon}Q_{\varepsilon, i}^a+\varepsilon^2\mu_2\psi_{\varepsilon, i}-
2Q_{\varepsilon, i}^a(Q_{\varepsilon, r}^a\psi_{\varepsilon, r}+Q_{\varepsilon, i}^a\psi_{\varepsilon, i})
-\varepsilon(\sigma-\alpha|Q_0|^2)\psi_{\varepsilon, r}\\
&+&
2\varepsilon Q_{\varepsilon, r}^a( Q_{\varepsilon, r}^a \psi_{\varepsilon, r}+ Q_{\varepsilon, i}^a\psi_{\varepsilon, i} )\\
&-&2\psi_{\varepsilon, i}(Q_{\varepsilon, r}^a \psi_{\varepsilon, r}+ Q_{\varepsilon, i}^a\psi_{\varepsilon, i} )+
2\varepsilon \psi_{\varepsilon, r}(Q_{\varepsilon, r}^a\psi_{\varepsilon, r}+Q_{\varepsilon, i}^a\psi_{\varepsilon, i})\\
&+&\varepsilon Q_{\varepsilon, r}^a(\psi_{\varepsilon, r}^2+\psi_{\varepsilon, i}^2)-
\psi_{\varepsilon, i}(\psi_{\varepsilon, r}^2+\psi_{\varepsilon, i}^2)
+\varepsilon \psi_{\varepsilon, r}(\psi_{\varepsilon, r}^2+\psi_{\varepsilon, i}^2)+\kappa_\varepsilon \psi_{\varepsilon,i}
\label{real part2}\\
&:=&\varepsilon\big(\kappa_\varepsilon Q_{1i}+((2+\alpha)Q_0^2-\sigma-2Q_0Q_{1i})\psi_{\varepsilon,r}\big)\\
&+&\varepsilon^5\varphi_2+G_\varepsilon(Q_{\varepsilon, r},Q_{\varepsilon, i},\kappa_\varepsilon),
\end{eqnarray*}
where $\varphi_2$ is given by
$$
\varphi_2:=-(2Q_0Q_{2r}+Q_{1i}^2) Q_{3i} -(Q_{2r}^2+2Q_{1i}Q_{3i})Q_{1i} +(2Q_{0}Q_{2r}+Q_{1i}^2) Q_{2r}
+(Q_{2r}^2+2Q_{1i}Q_{3i})Q_0
$$
and $G_\varepsilon$ can be explicitely computed. In particular it satisfies
$$
\|G_\varepsilon(\psi_{\varepsilon, r},\psi_{\varepsilon, i},\kappa_\varepsilon)\|_{\Sigma}\lesssim \varepsilon^7.
$$
Now we define a map $\Phi_\varepsilon:\Sigma\times \Sigma\times(0,\infty)\to\Sigma\times \Sigma\times(0,\infty)$ by
$$
\Phi_\varepsilon(\tilde\psi_{\varepsilon, r},\tilde\psi_{\varepsilon, i},\tilde\kappa_\varepsilon)=
(\psi_{\varepsilon, r},\psi_{\varepsilon, i},\kappa_\varepsilon)
$$
where, $(\psi_{\varepsilon, r},\psi_{\varepsilon, i},\kappa_\varepsilon)$ solves
\begin{equation}
\label{L+ contrac} \left\{
 \begin{aligned}
  L_+\psi_{\varepsilon, r}&=\kappa_\varepsilon Q_0+\varepsilon^4g_1+
F_\varepsilon(\tilde\psi_{\varepsilon, r},\tilde\psi_{\varepsilon, i},\tilde\kappa_\varepsilon) \\
L_-\psi_{\varepsilon, i}&=\varepsilon\big(\kappa_\varepsilon Q_{1i}+((2+\alpha)Q_0^2-\sigma-2Q_0Q_{1i})\psi_{\varepsilon,r}\big)
+\varepsilon^5\varphi_2+G_\varepsilon(\tilde\psi_{\varepsilon, r},\tilde\psi_{\varepsilon, i},\tilde\kappa_\varepsilon),\\
(L_-\psi_{\varepsilon, i},Q_0)_2&=0.
 \end{aligned}
\right.
\end{equation}
Now the purpose is to show that there are positive constants $C_1,C_2$ and $C_3$ such that the above map is a
contraction on the ball
$$
B_\varepsilon:=\{(\psi_{\varepsilon, r},\psi_{\varepsilon, i},\kappa_\varepsilon):\quad
|\kappa_\varepsilon|\leq C_1 \varepsilon^4,\;\|\psi_{\varepsilon, r}\|_{\Sigma}\leq C_2\varepsilon^4,\;
\|\psi_{\varepsilon, i}\|_{\Sigma}\leq C_3\varepsilon^5\},
$$
for $\varepsilon>0$ sufficiently small. The ball $B_\varepsilon$ is endowed with the norm
\begin{eqnarray}
 \label{norm}
\max\left\{\frac{|\kappa_\varepsilon|}{ C_1 \varepsilon^4},   \;\frac{\|\psi_{\varepsilon, r}\|_{\Sigma}}{C_2\varepsilon^4},\;
\frac{\|\psi_{\varepsilon, i}\|_{\Sigma}}{C_3\varepsilon^5}      \right\}.
\end{eqnarray}
\\
Thanks to the equation on $\psi_{\varepsilon,r}$ and the invertibility of $L_+$, we can write
\begin{eqnarray}\label{psi eps r}
\psi_{\varepsilon,r}=\kappa_\varepsilon Q'(\mu_0)+\varepsilon^4L_+^{-1}(g_1)+L_+^{-1}
\big(F_\varepsilon(\tilde\psi_{\varepsilon, r},\tilde\psi_{\varepsilon, i},\tilde\kappa_\varepsilon)\big).
\end{eqnarray}
Plugging the above identity in the equation on $\psi_{\varepsilon,i}$, we obtain
\begin{eqnarray}\nonumber
L_-\psi_{\varepsilon, i}&=\varepsilon\kappa_\varepsilon\big( Q_{1, i}+((2+\alpha)Q_0^2+2Q_0Q_{1, i}-\sigma)Q'(\mu_0)\\ \nonumber
&+\varepsilon^5((2+\alpha)Q_0^2+2Q_0Q_{1, i}-\sigma)L_+^{-1}(g_1)+L_+^{-1}
\big(F_\varepsilon(\tilde\psi_{\varepsilon, r},\tilde\psi_{\varepsilon, i},\tilde\kappa_\varepsilon)\big).
\end{eqnarray}
Since,
$$
\big(Q_{1, i}+((2+\alpha)Q_0^2-2Q_0Q_{1, i}-\sigma)Q'(\mu_0),Q_0\big)_2=\|Q_0\|_{L^2}^2
$$
then the choice of
$$
\kappa_\varepsilon=-\frac{\varepsilon^4}{\|Q_0\|_{L^2}^2}\int_{\mathbb R^2}Q_0g_1\;dx-
\frac1\varepsilon\int_{\mathbb R^2}Q_0L_+^{-1}
\big(F_\varepsilon(\tilde\psi_{\varepsilon, r},\tilde\psi_{\varepsilon, i},\tilde\kappa_\varepsilon)\big)\;dx
$$
makes
$$
(L_-\psi_{\varepsilon, i},Q_0)_2=0
$$
,which enables us to invert $L_-$ and thus calculate $\psi_{\varepsilon, i}$:

\begin{eqnarray}\nonumber
 \psi_{\varepsilon, i}&=&\varepsilon\kappa_\varepsilon L_-^{-1}\big( Q_{1, i}+((2+\alpha)Q_0^2-2Q_0Q_{1, i}-\sigma)Q'(\mu_0)\big)+
L_-^{-1}L_+^{-1}
\big(F_\varepsilon(\tilde\psi_{\varepsilon, r},\tilde\psi_{\varepsilon, i},\tilde\kappa_\varepsilon)\big)
\\
&+&
\varepsilon^5L_-^{-1}\big(((2+\alpha)Q_0^2-2Q_0Q_{1, i}-\sigma)L_+^{-1}(g_1)\big)
\end{eqnarray}
Let
\begin{eqnarray}
 \label{constants c1c2c3}
C_1:=\frac{2\|g_1\|_{L^2}}{\|Q_0\|_{L^2}},
\nonumber
\end{eqnarray}
\begin{eqnarray}
C_2=2\big(C_1\|Q'(\mu_0)\|_\Sigma+\|L_+^{-1}(g_1)\|_\Sigma\big),\nonumber
 \end{eqnarray}
and
\begin{eqnarray}
C_3:&=&2C_1\|L_-^{-1}\big( Q_{1, i}+((2+\alpha)Q_0^2-2Q_0Q_{1, i}-\sigma)Q'(\mu_0)\big)\|_\Sigma\nonumber\\
&+&
2\|L_-^{-1}\big(((2+\alpha)Q_0^2-2Q_0Q_{1, i}-\sigma)L_+^{-1}(g_1)\big)\|_\Sigma\nonumber
\end{eqnarray}
To show that $\Phi_\varepsilon$ is a contraction, consider
$(\tilde\psi_{\varepsilon, r}^a,\tilde\psi_{\varepsilon, i}^a,\tilde\kappa_\varepsilon^a)$ and
$(\tilde\psi_{\varepsilon, r}^b,\tilde\psi_{\varepsilon, i}^b,\tilde\kappa_\varepsilon^b)$ in the ball $B_\varepsilon$ and denote by
$(\psi_{\varepsilon, r}^a,\psi_{\varepsilon, i}^a,\kappa_\varepsilon^a)$ and
$(\psi_{\varepsilon, r}^b,\psi_{\varepsilon, i}^b,\kappa_\varepsilon^b)$ their respective images through the map $\Phi_\varepsilon$. We have

$$
\kappa_\varepsilon:=\kappa_\varepsilon^a-\kappa_\varepsilon^b=
\frac1\varepsilon\int_{\mathbb R^2}Q_0L_+^{-1}
\big[F_\varepsilon(\tilde\psi_{\varepsilon, r}^b,\tilde\psi_{\varepsilon, i}^b,\tilde\kappa_\varepsilon^b)-
F_\varepsilon(\tilde\psi_{\varepsilon, r}^a,\tilde\psi_{\varepsilon, i}^a,\tilde\kappa_\varepsilon^a)
\big]\;dx,
$$

$$
\psi_{\varepsilon, r}:=\psi_{\varepsilon, r}^a-\psi_{\varepsilon, r}^b=\kappa_\varepsilon Q'(\mu_0)-L_+^{-1}
\big(F_\varepsilon(\tilde\psi_{\varepsilon, r}^b,\tilde\psi_{\varepsilon, i}^b,\tilde\kappa_\varepsilon^b)-
F_\varepsilon(\tilde\psi_{\varepsilon, r}^a,\tilde\psi_{\varepsilon, i}^a,\tilde\kappa_\varepsilon^a)
\big),
$$
and
\begin{eqnarray}\nonumber
\psi_{\varepsilon, i}:=\psi_{\varepsilon, r}^a-\psi_{\varepsilon, r}^b&=&
\varepsilon\kappa_\varepsilon L_-^{-1}\big( Q_{1, i}+((2+\alpha)Q_0^2-2Q_0Q_{1, i}-\sigma)Q'(\mu_0)\big)\\&-&
L_-^{-1}L_+^{-1}
\big(F_\varepsilon(\tilde\psi_{\varepsilon, r}^b,\tilde\psi_{\varepsilon, i}^b,\tilde\kappa_\varepsilon^b)-
F_\varepsilon(\tilde\psi_{\varepsilon, r}^a,\tilde\psi_{\varepsilon, i}^a,\tilde\kappa_\varepsilon^a)
\big).
\nonumber
\end{eqnarray}
Estimating $\kappa_\varepsilon$, $\psi_{\varepsilon, r}$ and $\psi_{\varepsilon, i}$ using the above bounds on $F_\varepsilon$ and $G_\varepsilon$ yields

\begin{eqnarray}\nonumber
|\kappa_\varepsilon|\lesssim \varepsilon^5, \quad \|\psi_{\varepsilon, r}\|_\Sigma\lesssim\varepsilon^5,\quad  \|\psi_{\varepsilon, i}\|_\Sigma\lesssim\varepsilon^6
\nonumber
\end{eqnarray}
showing the contraction of the map $\Phi_\varepsilon$
\end{proof}

\section{The Cauchy Problem}
In this section, we study the Cauchy problem:
\begin{equation}\label{eq11}\left\{ \begin{array}{ll}
i\partial_t\psi + \Delta\psi &= V(x) \psi + |\psi|^2 \psi +
i(\sigma(x) - \alpha |\psi|^2) \psi,\quad t > 0, x \in
\mathbb{R}^2,\\
\psi_{|t=0}& = \psi_0.
\end{array}\right.
\end{equation}
We will first assume that $\sigma\in L^4(\mathbb{R}^2)$, we set $U(t) = e^{-it(-\Delta +V)}$.\\
\begin{definition}\label{def 4.1}
A pair $(p,q)$ is admissible if $2 \leq q <
\infty$ and
$$\frac{2}{p} = \delta(q) : = 2 (\frac{1}{2} - \frac{1}{q}).$$
\end{definition}
Recall the following Strichartz estimates for the Schr\"odinger equation with potential  are due to \cite{carles}.
\begin{proposition}\label{prop 4.2}
Let $T> 0$.
\begin{enumerate}
\item For any admissible pair $(p,q)$, there exists $C_q(T)$ such
that
$$\|U(.)\varphi\|_{L^p([0,T] ;L^q)} \leq C_q(T)\|\varphi\|_{L^2},\quad \forall\; \varphi \in L^2(\mathbb{R}^2).\eqno{(4.2)}$$
\item Denote
$$D(F)(t,x) = \int^t_0 U(t - s) F(s,x)d\tau.$$
For all admissible pairs $(p_1,q_1)$ and $(p_2,q_2)$, there exists
$C = C_{q_1,q_2}(T)$ such that
$$\|D(F)\|_{L^{p_1}([0,\tau]; L^{q_1})} \leq C\|F\|_{{L^{p'_2}}([0,\tau]; L^{q'_2})},\eqno{(4.3)}$$
for all $F \in L^{p'_2}([0,T] ; L^{q'_2})$ and $0 \leq \tau \leq
T$.
\end{enumerate}
\end{proposition}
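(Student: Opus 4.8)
The plan is to derive the estimates (4.2)--(4.3) from an explicit dispersive bound for the propagator $U(t)=e^{-it(-\Delta+V)}$ on $\mathbb{R}^2$, $V(x)=|x|^2$, combined with the abstract Strichartz machinery and a gluing argument in time; these are precisely the estimates of \cite{carles}, so the proof in the paper reduces to that reference, but I outline the argument. First I would recall Mehler's formula: away from the caustic times (integer multiples of $\pi/2$), $U(t)$ is an oscillatory integral operator with a real quadratic phase and amplitude of size $|\sin 2t|^{-1}$, whence $\|U(t)f\|_{L^\infty(\mathbb{R}^2)}\lesssim |\sin 2t|^{-1}\|f\|_{L^1(\mathbb{R}^2)}$; in particular $\|U(t)f\|_{L^\infty}\lesssim |t|^{-1}\|f\|_{L^1}$ for $|t|\le\delta_0$ with a fixed $\delta_0<\pi/4$, while $\|U(t)f\|_{L^2}=\|f\|_{L^2}$ since $-\Delta+V$ is self-adjoint. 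An equivalent route is the lens transform, which conjugates $U(t)$ on $(-\pi/4,\pi/4)$ to the free propagator $e^{it\Delta}$ on $\mathbb{R}$ via an explicit change of space-time variables preserving the $L^2_x$ norm and mapping $L^p_tL^q_x$ to mixed norms of the same indices, so the classical free Strichartz estimates transfer on that interval.

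Next I would interpolate the $L^1\to L^\infty$ and $L^2\to L^2$ bounds to obtain $\|U(t)f\|_{L^q}\lesssim |t|^{-\delta(q)}\|f\|_{L^{q'}}$ for $2\le q\le\infty$, with $\delta(q)=2(1/2-1/q)$ as in Definition \ref{def 4.1}. Since in dimension $2$ we have $\delta(q)\le 1$, with equality only at the excluded endpoint $q=\infty$, the non-endpoint Keel--Tao theorem applies on the short interval $[0,\delta_0]$ and produces (4.2) there, as well as the inhomogeneous estimate (4.3) on $[0,\delta_0]$, the retarded Duhamel operator being controlled by the Christ--Kiselev lemma (available precisely because all admissible pairs in play are strictly non-endpoint).

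Then I would glue local estimates: partition $[0,T]$ into $N\sim T/\delta_0$ consecutive subintervals $I_k=[t_{k-1},t_k]$ of length $\le\delta_0$, write $U(t)\varphi=U(t-t_{k-1})\bigl(U(t_{k-1})\varphi\bigr)$ on $I_k$, and use that $U(t_{k-1})$ is an $L^2$-isometry to reduce the estimate over $I_k$ to the short-interval one; raising to the $p$-th power, summing over $k$ and extracting the $p$-th root gives (4.2) with $C_q(T)\lesssim (T/\delta_0)^{1/p}$. For (4.3) I would split the double time integral along the partition, estimate the diagonal blocks $s,t\in I_k$ by the short-interval (4.3) and the off-diagonal blocks $s\in I_j$, $t\in I_k$ with $j<k$ by composing (4.2) with its dual together with the $L^2$-unitarity to move the intermediate time, then sum the finitely many blocks to produce $C_{q_1,q_2}(T)$ depending only on $T$ and the exponents.

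The hard part is that the dispersive bound for $U(t)$ genuinely degenerates at the caustic times where $\sin 2t$ vanishes, so there is no global-in-time dispersion and the partition-and-sum step cannot be avoided; one must check that the gluing preserves the admissibility relation $2/p=\delta(q)$ and that the accumulated constant stays finite for each fixed $T$ (which is also why the constants must depend on $T$ rather than being universal). The endpoint $q=\infty$ has to be excluded, since it already fails for the free $2$D Schr\"odinger equation, and this is the reason Definition \ref{def 4.1} imposes $q<\infty$. Complete details are given in \cite{carles}.
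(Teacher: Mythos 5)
Your outline is correct and is essentially the argument behind the cited result: the paper itself offers no proof of Proposition \ref{prop 4.2}, simply attributing the estimates to \cite{carles}, and the route you describe (Mehler/lens-transform dispersive bound $\lesssim|\sin 2t|^{-1}$, local non-endpoint Keel--Tao plus Christ--Kiselev, then gluing over $O(T/\delta_0)$ subintervals using the $L^2$-unitarity of $U(t)$, which is what forces the $T$-dependence of the constants) is exactly the standard proof underlying that reference. Nothing further is needed.
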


\begin{proposition}
\label{prop 4.3}
There exists $\delta > 0$ such that if
$\psi_0 \in L^2(\mathbb{R}^2)$ and $T \in [0,1]$ are  such that
$$\|U(.)\psi_0\|_{L^4([0,T]\times \mathbb{R}^2)} \leq \delta \quad
\mbox{ and } \quad T^{3/4}\|\sigma\|_{L^4(\mathbb{R}^2)}\leq
\frac{1}{8}\;,$$ then (4.1) has a unique solution
$$\psi \in C([0,T] ; L^2) \cap L^4 ([0,T] \times \mathbb{R}^2).$$
\end{proposition}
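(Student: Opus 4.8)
The plan is to solve (4.1) by a contraction mapping on its Duhamel form, using the Strichartz estimates of Proposition~\ref{prop 4.2} for the admissible pair $(p,q)=(4,4)$ (indeed $\delta(4)=\tfrac12=\tfrac24$) together with the endpoint dual pair $L^1_tL^2_x$. Writing the nonlinear terms as $N(\psi):=(1-i\alpha)|\psi|^2\psi+i\sigma\psi$, equation (4.1) is equivalent to the fixed-point problem $\psi=\Phi(\psi)$ with
\[
\Phi(\psi)(t):=U(t)\psi_0-i\int_0^t U(t-s)\,N(\psi)(s)\,ds=:U(t)\psi_0-iD\bigl(N(\psi)\bigr)(t).
\]
I would run the iteration on the closed ball $E:=\{\psi\in L^4([0,T]\times\mathbb R^2):\ \|\psi\|_{L^4_{t,x}}\le 2\delta\}$, a complete metric space for the $L^4_{t,x}$-distance, and then recover the continuity $\psi\in C([0,T];L^2)$ a posteriori from the identity $\psi=U(\cdot)\psi_0-iD(N(\psi))$, since $U(\cdot)\psi_0\in C([0,T];L^2)$ and $D$ maps the dual Strichartz spaces occurring below continuously into $C([0,T];L^2)$.

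The two nonlinear pieces are estimated separately. The cubic one is the standard $L^2$-critical term: Hölder gives $\|\,|\psi|^2\psi\,\|_{L^{4/3}_{t,x}}=\|\psi\|_{L^4_{t,x}}^3$, so (4.3) with $(p_2,q_2)=(4,4)$ (self-dual) and $(p_1,q_1)\in\{(4,4),(\infty,2)\}$ yields $\|D((1-i\alpha)|\psi|^2\psi)\|_{L^4_{t,x}}+\|D((1-i\alpha)|\psi|^2\psi)\|_{L^\infty_tL^2_x}\le C(1+\alpha)\|\psi\|_{L^4_{t,x}}^3$. The linear potential term $i\sigma\psi$ is not of higher order, so it cannot be made small merely by shrinking the solution; instead one uses $\sigma\in L^4$, Hölder in time and the dual endpoint pair: from $\|\sigma\psi\|_{L^2_x}\le\|\sigma\|_{L^4}\|\psi\|_{L^4_x}$ and $\|\cdot\|_{L^1_t([0,T])}\le T^{3/4}\|\cdot\|_{L^4_t([0,T])}$ one has $\|\sigma\psi\|_{L^1_tL^2_x}\le T^{3/4}\|\sigma\|_{L^4}\|\psi\|_{L^4_{t,x}}$, whence (4.3) with $(p_2,q_2)=(\infty,2)$ (dual $(1,2)$) gives $\|D(\sigma\psi)\|_{L^4_{t,x}}+\|D(\sigma\psi)\|_{L^\infty_tL^2_x}\le CT^{3/4}\|\sigma\|_{L^4}\|\psi\|_{L^4_{t,x}}$. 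Combining with $\|U(\cdot)\psi_0\|_{L^4_{t,x}}\le\delta$ (hypothesis) and $\|U(\cdot)\psi_0\|_{L^\infty_tL^2}=\|\psi_0\|_{L^2}$ ($U$ is $L^2$-unitary), one obtains for $\psi\in E$
\[
\|\Phi(\psi)\|_{L^4_{t,x}}\ \le\ \delta+C(1+\alpha)\,\|\psi\|_{L^4_{t,x}}^3+C\,T^{3/4}\|\sigma\|_{L^4}\,\|\psi\|_{L^4_{t,x}}\ \le\ 2\delta,
\]
the last step holding once $\delta$ is small (depending only on $\alpha$ and the Strichartz constants) and $T^{3/4}\|\sigma\|_{L^4}\le\tfrac18$; hence $\Phi(E)\subset E$.

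For the contraction, the pointwise bound $\bigl|\,|\psi|^2\psi-|\phi|^2\phi\,\bigr|\lesssim(|\psi|^2+|\phi|^2)|\psi-\phi|$ plus Hölder give $\|\,|\psi|^2\psi-|\phi|^2\phi\,\|_{L^{4/3}_{t,x}}\lesssim(\|\psi\|_{L^4_{t,x}}^2+\|\phi\|_{L^4_{t,x}}^2)\|\psi-\phi\|_{L^4_{t,x}}$, while $\sigma\psi-\sigma\phi=\sigma(\psi-\phi)$ is handled exactly as before; therefore, on $E$,
\[
\|\Phi(\psi)-\Phi(\phi)\|_{L^4_{t,x}}\ \le\ \bigl(C(1+\alpha)\delta^2+C\,T^{3/4}\|\sigma\|_{L^4}\bigr)\,\|\psi-\phi\|_{L^4_{t,x}}\ \le\ \tfrac12\,\|\psi-\phi\|_{L^4_{t,x}}
\]
for $\delta$ small and $T^{3/4}\|\sigma\|_{L^4}\le\tfrac18$. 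Banach's fixed point theorem then yields a unique $\psi\in E$ with $\psi=\Phi(\psi)$, and the identity $\psi=U(\cdot)\psi_0-iD(N(\psi))$ places it in $C([0,T];L^2)\cap L^4([0,T]\times\mathbb R^2)$. Uniqueness in the whole class (not only in $E$) follows by the usual absorption argument: two such solutions agree on a subinterval $[0,T']$ on which both have $L^4_{t,x}$-norm below a fixed threshold (the same difference estimate forces equality there), and since $t\mapsto\|\psi\|_{L^4([0,t]\times\mathbb R^2)}$ is continuous and finite the subinterval extends to all of $[0,T]$. The step I expect to demand the most care is the treatment of the genuinely linear term $i\sigma\psi$: it forces the hypothesis $\sigma\in L^4$, the time-Hölder gain $T^{3/4}$ and the dual endpoint pair $L^1_tL^2_x$, and the smallness assumption $T^{3/4}\|\sigma\|_{L^4}\le\tfrac18$ is needed precisely to keep its contribution to both $\Phi(E)\subset E$ and the contraction constant strictly below threshold; the cubic term, being the familiar $L^2$-critical nonlinearity, is absorbed purely through the smallness of $\delta$.
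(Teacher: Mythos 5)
Your proposal is correct and follows essentially the same route as the paper: a Banach fixed-point argument on the Duhamel formulation, with the cubic term controlled through the dual Strichartz bound on $L^{4/3}_{t,x}$ and the linear term $i\sigma\psi$ through $\|\sigma\psi\|_{L^1_tL^2_x}\le T^{3/4}\|\sigma\|_{L^4}\|\psi\|_{L^4_{t,x}}$, absorbed using the hypothesis $T^{3/4}\|\sigma\|_{L^4}\le\tfrac18$. The only (harmless) difference is that you contract on the $L^4_{t,x}$ ball alone and recover $C([0,T];L^2)$ a posteriori, whereas the paper builds the ball inside $C([0,T];L^2)\cap L^4_{t,x}$ from the start; your uniqueness-by-subdivision argument is also the one used there.
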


\begin{proof}[Proof of Proposition \ref{prop 4.3}] Let
$$X = \{\psi \in C([0,T] ; L^2) \cap L^4([0,T] \times
\mathbb{R}^2),\quad \|\psi\|_{L^4([0,T] \times \mathbb{R}^2)} \leq
2\delta\}.$$
In view of Duhamel's formula, and for $\psi\in X$, introduce the map
$$\Phi(\psi)(t) : = U(t)\psi_0 - i \int^t_0 U(t-s)(1-i\alpha)|\psi|^2 \psi(s)ds +
 \int^t_0 U(t-s)(\sigma\psi)(s)ds.
 $$
 From Strichartz inequalities
 \begin{eqnarray*}
 \|\Phi(\psi)\|_{L^4([0,T]\times \mathbb{R}^2)} &\leq&
 \|U(.)\psi_0\|_{L^4([0,T]\times \mathbb{R}^2)} +
 C(1+\alpha)\||\psi|^2\psi\|_{L^{4/3}([0,T]\times \mathbb{R}^2)}\\
 &+& \|\sigma \psi\|_{L^1([0,T]; L^2)}\\
 &\leq& \delta + C(1+\alpha\|\psi\|^3_{L^4([0,T]\times
 \mathbb{R}^2)} + \|\sigma\|_{L^4}\|\psi\|_{L^1([0,T];L^4)}\\
 &\leq& \delta + C(1+\alpha)(2\delta)^3 +
 \|\sigma\|_{L^4}T^{3/4}\|\psi\|_{L^4([0,T] \times \mathbb{R}^2)}\\
 &\leq& \delta + C(1+\alpha)(2\delta)^3 + 2\delta
 T^{3/4}\|\sigma\|_{L^4}\\
 &\leq& \delta + C(1+\alpha)(2\delta)^3 + \frac{\delta}{4}.
 \end{eqnarray*}
By
 choosing $\delta > 0$ sufficiently small, the right hand side does
 not exceed $2\delta : X$ is stable under the action of $\Phi$. For
 the contraction, let $\psi_1, \psi_2 \in X $ :
 \begin{eqnarray*}
 \|\Phi(\psi_1) - \Phi(\psi_2)\|_{L^4([0,T]\times \mathbb{R}^2)}
 &\leq& C(1+\alpha)\||\phi_1|^2\psi_1 - |\psi_2|^2 \psi_2\|_{L^{4/3}([0,T]\times
 \mathbb{R}^2)}\\
 &+& \|\sigma(\psi_1-\psi_2)\|_{L^1([0,T];L^2)}\\
 &\leq& C(\|\psi_1\|^2_{L^4([0,T]\times \mathbb{R}^2)} +
 \|\psi_2\|^2_{L^4([0,T]\times \mathbb{R}^2)})\|\psi_1-\psi_2\|_{L^4([0,T]\times
 \mathbb{R}^2)}\\
 &+& \|\sigma\|_{L^4}T^{3/4}\|\psi_1 - \psi_2\|_{L^4([0,T] \times
 \mathbb{R}^2)}\\
 &\leq& (C\delta^2 + \frac{1}{8})\|\psi_1 - \psi_2\|_{L^4([0,T]\times
 \mathbb{R}^2)}.
 \end{eqnarray*}
 Up to decreasing $\delta$ again, the factor on the right hand side
 does not exceed $1/2$ and $\Phi$ is a contraction on $X$. This proves the
 existence part of the proposition. The uniqueness part readily
 follows from the remark that if $\psi \in L^4([0,T] \times
 \mathbb{R}^2)$, then $[0,T]$ can be split finitely many times on
 intervals where
 $$\|\psi\|_{L^4(I_j \times \mathbb{R}^2)} \leq 2\delta,$$
 so uniqueness on $X$ can be deduced.
 \end{proof}

 \begin{theorem}\label{th4.1}Let $\psi_0 \in L^2(\mathbb{R}^2), \sigma \in
 L^4(\mathbb{R}^2)$. Then (4.1) has a unique, maximal solution
 $$\psi \in C([0, T_{max}) ; L^2)\cap L^4_{loc}([0, T_{max}) ; L^4(\mathbb{R}^2)).$$
Moreover,  in $[0, T_{max})$:
 $$\frac{d}{dt} \|\psi(t)\|^2_{L^2} + \alpha\|\psi(t)\|^4_{L^4} -
\int_{\mathbb{R}^2}\sigma(x)|\psi(t,x)|^2 dx = 0.\eqno{(4.4)}$$ It
is maximal in the sense that if $T_{max}$ is finite, then
$$\int^{T_{max}}_0\int_{\mathbb{R}^2}|\psi(t,x)|^4 dtdx = \infty.$$
\end{theorem}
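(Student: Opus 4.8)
The plan is to construct the maximal solution by a contraction-plus-continuation argument based on Proposition~\ref{prop 4.3}, to deduce the blow-up alternative from the Strichartz estimates, and to establish the dissipation law (4.4) by a regularisation argument.

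First, given $\psi_0\in L^2(\mathbb R^2)$, the Strichartz estimate (4.2) applied to the admissible pair $(4,4)$ (for which $\tfrac2p=\delta(4)=\tfrac12$) shows that $U(\cdot)\psi_0\in L^4([0,1]\times\mathbb R^2)$, so by absolute continuity of the Lebesgue integral one may pick $T\in(0,1]$ small enough that both $\|U(\cdot)\psi_0\|_{L^4([0,T]\times\mathbb R^2)}\le\delta$ and $T^{3/4}\|\sigma\|_{L^4}\le\tfrac18$ hold; Proposition~\ref{prop 4.3} then yields a solution on $[0,T]$. The uniqueness statement of that proposition (split a time interval into finitely many pieces of small $L^4$-norm) shows that two solutions on overlapping time intervals coincide, so solutions can be glued together and one sets
$$
T_{max}:=\sup\{\,T>0:\ \text{(4.1) has a solution in }C([0,T];L^2)\cap L^4([0,T]\times\mathbb R^2)\,\},
$$
producing a unique maximal solution $\psi\in C([0,T_{max});L^2)\cap L^4_{loc}([0,T_{max});L^4(\mathbb R^2))$.

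Next, for the blow-up alternative, suppose $T_{max}<\infty$ but $\int_0^{T_{max}}\!\int_{\mathbb R^2}|\psi|^4\,dx\,dt<\infty$. Then $(1-i\alpha)|\psi|^2\psi\in L^{4/3}([0,T_{max}]\times\mathbb R^2)$, the dual exponent pair of the admissible $(4,4)$, while H\"older in time together with $\sigma\in L^4$ gives $\|\sigma\psi\|_{L^1([0,T_{max}];L^2)}\le T_{max}^{3/4}\|\sigma\|_{L^4}\|\psi\|_{L^4([0,T_{max}]\times\mathbb R^2)}<\infty$. Inserting these bounds into Duhamel's formula and invoking the inhomogeneous estimate (4.3) for the cubic term together with the unitarity of $U$ on $L^2$ for the $\sigma\psi$ term, one sees (by approximating the forcing terms by smooth functions) that $t\mapsto\psi(t)$ extends to an element of $C([0,T_{max}];L^2)$; in particular $\psi(t)\to\psi^\ast\in L^2$ as $t\uparrow T_{max}$. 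Applying the first step again with datum $\psi^\ast$ at time $T_{max}$ gives a solution on $[T_{max},T_{max}+\tau]$ for some $\tau>0$, and gluing it to $\psi$ contradicts the maximality of $T_{max}$; hence if $T_{max}<\infty$ then $\int_0^{T_{max}}\!\int_{\mathbb R^2}|\psi|^4=\infty$.

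It remains to prove the dissipation law (4.4). Formally, multiplying (4.1) by $\bar\psi$, integrating over $\mathbb R^2$ and taking imaginary parts annihilates the contributions of $-\Delta$, $V$ and $|\psi|^2$ and leaves exactly (4.4). The point that requires care — and the step I expect to be \emph{the main obstacle} — is that the solution produced above lies only in $L^2$, so $\nabla\psi$ and the pointwise products appearing in this computation are not a priori meaningful. I would bypass this by approximation: take $\psi_0^n\in\Sigma$ with $\psi_0^n\to\psi_0$ in $L^2$, prove by a standard local persistence-of-regularity argument that on every $[0,T]\subset[0,T_{max})$ the associated solutions satisfy $\psi^n\in C([0,T];\Sigma)$, so that they verify the integrated form of (4.4) classically; then pass to the limit using the Strichartz stability estimates implicit in the contraction of Proposition~\ref{prop 4.3}, which give $\psi^n\to\psi$ in $C([0,T];L^2)\cap L^4([0,T]\times\mathbb R^2)$ and hence convergence of every term, so that $\psi$ satisfies the integrated identity and therefore (4.4) holds on $[0,T_{max})$. (Alternatively, one may regularise directly in the equation and test the mollified equation against $\bar\psi$, which avoids invoking persistence of regularity.) Finally, when moreover $\sigma\in L^\infty$ one recovers Theorem~\ref{Cauchy pb}: by (4.4), $\tfrac{d}{dt}\|\psi(t)\|_{L^2}^2\le\|\sigma\|_{L^\infty}\|\psi(t)\|_{L^2}^2$, so Gr\"onwall gives $\|\psi(t)\|_{L^2}^2\le e^{\|\sigma\|_{L^\infty}t}\|\psi_0\|_{L^2}^2$, and a further integration of (4.4) bounds $\int_0^T\!\int_{\mathbb R^2}|\psi|^4$ by a constant multiple of $e^{\|\sigma\|_{L^\infty}T}\|\psi_0\|_{L^2}^2$, which combined with the blow-up alternative forces $T_{max}=\infty$.
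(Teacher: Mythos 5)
Your proposal is correct and follows essentially the same route as the paper: local existence from Proposition~\ref{prop 4.3} via the vanishing of $\|U(\cdot)\psi_0\|_{L^4([0,T]\times\mathbb R^2)}$ as $T\to0$, gluing by uniqueness to define $T_{max}$, and a Strichartz-based continuation criterion. The only substantive difference is in the blow-up alternative: you extract a limit $\psi^\ast=\lim_{t\uparrow T_{max}}\psi(t)$ in $L^2$ and restart at $T_{max}$, whereas the paper applies Duhamel at a time $t$ close to $T_{max}$ to show $\|U(\cdot)\psi(t)\|_{L^4((0,T_{max}-t)\times\mathbb R^2)}<\delta$ and reapplies Proposition~\ref{prop 4.3} directly from time $t$; these are standard equivalent variants of the Cazenave--Weissler argument. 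Note also that the paper's proof of Theorem~\ref{th4.1} does not actually justify the identity (4.4) at all (it is only used later in Corollary~\ref{4.5}), so your regularisation/persistence-of-regularity sketch supplies a step the paper leaves implicit; it is the standard way to make (4.4) rigorous for $L^2$ data.
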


\begin{proof} Since $\psi_0 \in L^2$, the homogeneous Strichartz
inequality (2.1) implies $U(.) \psi_0 \in L^4([0,1]) \times
\mathbb{R}^2)$, hence
$$\|U(.)\psi_0\|_{L^4([0,T]\times \mathbb{R}^2)}
\displaystyle{\mathop{\mapsto}_{T\rightarrow 0}}0.$$
Moreover, Proposition \ref{prop 4.3} yields a local solution satisfying \eqref{eq11}. 
For the notion of maximality, we proceed as in [2]. Suppose that
$\psi \in C([0, T_{max}) ; L^2) \cap L^4([0, T_{max}] \times
\mathbb{R}^2)$, with $T_{max}$ finite, and that $\psi$ cannot be
extended to larger time. Let $t \in [0, T_{max})$ and $s \in [0,
T_{max}-t)$. Duhamel's formula implies
$$U(s)\psi(t) = \psi(t+s)+i \int^s_0U(s-s')(1-i\alpha)|\psi|^2\psi(t+s')ds' - \int^s_0
U(s-s')(\sigma\psi) (t+s')ds'.$$ In view of the same inequality as
in the proof of Proposition 5.1.
\begin{eqnarray*}
\|U(.)\psi(t)\|_{L^4((0, T_{max}-t)\times \mathbb{R}^2)} &\leq&
\|\psi\|_{L^4((t,T_{max}) \times \mathbb{R}^2)} +
C\|\psi\|^3_{L^4(t,T_{max}) \times \mathbb{R}^2}\\
&&+ \frac{1}{8}\|\psi\|_{L^4((t,T_{max})\times \mathbb{R}^2)}.
\end{eqnarray*}
The right hand side is less than $\delta$ if $t$ is close to
$T_{max}$. Proposition 4.3 shows that $\psi$ can be extended  after
$T_{max}$, in contradiction with the definition of $T_{max}$.
\end{proof}

\begin{corollary}\label{4.5} If $\psi_0 \in L^2(\mathbb{R}^2)$ and $\sigma
\in L^4 \cap L^\infty(\mathbb{R}^2)$, then $T_{max} = \infty$, and
for all $t \geq 0$,
$$\|\psi(t)\|^2_{L^2} \leq \|\psi_0\|^2_{L^2}e^{t\|\sigma\|_{L^\infty}}.$$
\end{corollary}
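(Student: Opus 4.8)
The plan is to extract the exponential mass bound directly from the mass balance identity (4.4) of Theorem~\ref{th4.1} via Gronwall's inequality, and then to use the sign of the dissipative term $\alpha\|\psi(t)\|_{L^4}^4$ to preclude finite-time blow-up, thereby promoting the maximal solution of Theorem~\ref{th4.1} to a global one.

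First I would start from (4.4), namely $\frac{d}{dt}\|\psi(t)\|_{L^2}^2+\alpha\|\psi(t)\|_{L^4}^4=\int_{\mathbb R^2}\sigma(x)|\psi(t,x)|^2\,dx$ on $[0,T_{max})$. Since $\alpha\ge 0$ and $\int_{\mathbb R^2}\sigma|\psi|^2\le\|\sigma\|_{L^\infty}\|\psi(t)\|_{L^2}^2$, dropping the nonnegative $L^4$ term gives the differential inequality $\frac{d}{dt}\|\psi(t)\|_{L^2}^2\le\|\sigma\|_{L^\infty}\|\psi(t)\|_{L^2}^2$, and Gronwall's lemma yields $\|\psi(t)\|_{L^2}^2\le\|\psi_0\|_{L^2}^2\,e^{t\|\sigma\|_{L^\infty}}$ for every $t\in[0,T_{max})$, which is exactly the asserted estimate once we know $T_{max}=\infty$. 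To prove the latter I would instead integrate (4.4) over $[0,T]$ for arbitrary $T<T_{max}$, obtaining $\alpha\int_0^T\|\psi(t)\|_{L^4}^4\,dt\le\|\psi_0\|_{L^2}^2+\|\sigma\|_{L^\infty}\int_0^T\|\psi(t)\|_{L^2}^2\,dt$, and then inserting the Gronwall bound just derived to dominate the right-hand side by $\|\psi_0\|_{L^2}^2\,e^{T\|\sigma\|_{L^\infty}}$. For $\alpha>0$ this shows that $\int_0^T\int_{\mathbb R^2}|\psi|^4\,dx\,dt\le\alpha^{-1}\|\psi_0\|_{L^2}^2\,e^{T\|\sigma\|_{L^\infty}}$ remains finite as $T\uparrow T_{max}$; were $T_{max}$ finite, this would contradict the blow-up criterion $\int_0^{T_{max}}\int_{\mathbb R^2}|\psi|^4=\infty$ of Theorem~\ref{th4.1}. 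Hence $T_{max}=\infty$, and the pointwise bound holds for all $t\ge 0$.

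The only genuinely delicate point, which I would either take from Theorem~\ref{th4.1} or establish by regularizing the data and passing to the limit within the local theory of Proposition~\ref{prop 4.3}, is the rigorous validity of the mass identity (4.4) --- equivalently, the absolute continuity of $t\mapsto\|\psi(t)\|_{L^2}^2$ --- for solutions that only lie in $C([0,T_{max});L^2)\cap L^4_{\mathrm{loc}}$. I would also remark that the blow-up exclusion uses $\alpha>0$: the case $\alpha=0$ corresponds to the defocusing mass-critical cubic equation with a bounded, $L^4$ potential-type perturbation and requires its own global-existence input, but this affects neither the exponential mass bound nor the main content of the corollary.
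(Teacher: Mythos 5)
Your argument is correct and follows essentially the same route as the paper: both start from the mass identity (4.4), bound $\int\sigma|\psi|^2$ by $\|\sigma\|_{L^\infty}\|\psi(t)\|_{L^2}^2$, obtain the exponential bound by a Gronwall argument (the paper uses the integrating factor $e^{-\|\sigma\|_{L^\infty}t}$, which packages your two integration steps into one), and then use the resulting bound on $\alpha\int_0^T\|\psi(t)\|_{L^4}^4\,dt$ together with the blow-up alternative of Theorem~\ref{th4.1} to conclude $T_{max}=\infty$. Your remark that excluding blow-up requires $\alpha>0$ is well taken, since the paper's final estimate also divides by $\alpha$ without comment.
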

\begin{proof} Form (5.1)
$$\frac{d}{dt} \|\psi(t)\|^2_{L^2} + \alpha\|\psi(t)\|^4_{L^4} - \|\sigma\|_{L^\infty}
\|\psi(t)\|^2_{L^2} \leq 0,$$ hence
$$\frac{d}{dt} \left(e^{-\|\sigma\|_{L^\infty}t} \|\psi(t)\|^2_{L^2}\right)
+ \alpha e^{-\|\sigma\|_{L^\infty}t}\|\psi(t)\|^4_{L^4} \leq 0,$$
and
$$e^{-\|\sigma\|_{L^\infty}t}\|\psi(t)\|^2_{L^2} + \alpha \int^T_0 e^{-\|\sigma\|_{L^\infty}t}
\|\psi(t)\|^4_{L^4}dt \leq \|\psi_0\|^2_{L^2}.$$ Therefore, for all
$T$ finite,
$$\int^T_0 \|\psi(t)\|^4_{L^4}dt \leq \frac{e^{T\|\sigma\|_{L^\infty}}}{\alpha}\;
\|\psi_0\|^2_{L^2}\;,$$ hence $T_{max} = \infty$ in Theorem \ref{th4.1}.
\end{proof}

\begin{corollary}\label{4.6} If $\psi_0 \in \Sigma$ and $\sigma \in L^4 \cap
W^{1.\infty}(\mathbb{R}^2)$, then (4.1) has a unique, global
solution $\psi$, such that
$$\psi, \nabla \psi , x \psi \in C([0, \infty) ; L^2(\mathbb{R}^2)) \cap L^4_{loc}
([0, \infty) ; L^4(\mathbb{R}^2)).$$
\end{corollary}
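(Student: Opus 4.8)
The plan is to upgrade the $L^2$ well-posedness of Theorem~\ref{th4.1} and Corollary~\ref{4.5} to the energy space $\Sigma$ by a persistence-of-regularity argument, and then to rule out finite-time blow-up in $\Sigma$ by means of the energy identity \eqref{ddt hamil}. So the proof splits into a local $\Sigma$-theory and a global a priori bound.

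First I would redo the contraction argument of Proposition~\ref{prop 4.3} directly in $\Sigma$. Work in the space
\[
Y_T:=\{\psi:\ \psi,\nabla\psi,x\psi\in C([0,T];L^2)\cap L^4([0,T]\times\mathbb R^2)\},
\]
normed by $\|\psi\|_{Y_T}:=\sup_{t\in[0,T]}\|\psi(t)\|_\Sigma+\|\psi\|_{L^4}+\|\nabla\psi\|_{L^4}+\|x\psi\|_{L^4}$ (all space-time $L^4$-norms over $[0,T]\times\mathbb R^2$), and apply the same Duhamel map $\Phi$ as in Proposition~\ref{prop 4.3}. Two ingredients are needed. First, Strichartz estimates at the level of $\Sigma$: since $\|u\|_\Sigma=\|(1+H_0)^{1/2}u\|_{L^2}$ and $U(t)=e^{-it(-\Delta+V)}$ commutes with $(1+H_0)^{1/2}$, the propagator is a unitary group on $\Sigma$; moreover, because $-\Delta+|x|^2$ is a quadratic Hamiltonian, the conjugated vector fields $U(t)\nabla U(-t)$ and $U(t)xU(-t)$ are fixed linear combinations of $\nabla$ and $x$ with coefficients smooth and bounded on each $[0,T]$, so the homogeneous and inhomogeneous bounds of Proposition~\ref{prop 4.2} also hold with $\nabla$ or $x$ applied, as in \cite{carles}. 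Second, the nonlinear estimates: for the cubic term $N(\psi)=(1-i\alpha)|\psi|^2\psi$, Hölder with $\tfrac1{4/3}=\tfrac14+\tfrac14+\tfrac14$ gives
\[
\|N(\psi)\|_{L^{4/3}_{t,x}}+\|\nabla N(\psi)\|_{L^{4/3}_{t,x}}+\|xN(\psi)\|_{L^{4/3}_{t,x}}\lesssim(1+\alpha)\,\|\psi\|_{L^4}^2\,\|\psi\|_{Y_T},
\]
using $\nabla(|\psi|^2\psi)=O(|\psi|^2|\nabla\psi|)$ and $x(|\psi|^2\psi)=|\psi|^2(x\psi)$; and, for the potential term, using $\sigma\in W^{1,\infty}$,
\[
\|\sigma\psi\|_{L^1_TL^2}+\|\nabla(\sigma\psi)\|_{L^1_TL^2}+\|x(\sigma\psi)\|_{L^1_TL^2}\lesssim T\,\|\sigma\|_{W^{1,\infty}}\,\sup_{t\in[0,T]}\|\psi(t)\|_\Sigma,
\]
where $\nabla(\sigma\psi)=(\nabla\sigma)\psi+\sigma\nabla\psi$ uses $\nabla\sigma\in L^\infty$ and $x(\sigma\psi)=\sigma(x\psi)$ uses $\sigma\in L^\infty$. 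Hence, for $T=T(\|\psi_0\|_\Sigma)$ small enough, $\Phi$ is a contraction on a ball of $Y_T$ and produces a local solution $\psi\in Y_T$ of \eqref{eq11}; by $L^2$-uniqueness it coincides with the global $L^2$-solution of Corollary~\ref{4.5}. The construction also yields the usual blow-up alternative: the maximal $\Sigma$-time $T_\Sigma$ is either $+\infty$ or else $\|\psi(t)\|_\Sigma$ is unbounded as $t\uparrow T_\Sigma$.

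It then remains to bound $\|\psi(t)\|_\Sigma$ on every finite interval. The $L^2$ part is exactly Corollary~\ref{4.5}. For the rest, apply \eqref{ddt hamil} to the $\Sigma$-solution (justified by a standard regularization); since $\alpha\ge0$, the terms $-\alpha|\psi|^2(|\psi|^4+V|\psi|^2+|\nabla\psi|^2)$ and $-2\alpha(\mathrm{Re}\,(\psi\nabla\bar\psi))^2$ are pointwise $\le0$, and the integrand $|\psi|^4+V|\psi|^2+|\nabla\psi|^2$ is nonnegative, so
\[
\frac{d}{dt}\mathcal H(\psi(t))\le\|\sigma\|_{L^\infty}\int_{\mathbb R^2}\big(|\psi|^4+V|\psi|^2+|\nabla\psi|^2\big)\,dx=\|\sigma\|_{L^\infty}\big(\|\psi\|_{L^4}^4+\|x\psi\|_{L^2}^2+\|\nabla\psi\|_{L^2}^2\big).
\]
Since $\|x\psi\|_{L^2}^2+\|\nabla\psi\|_{L^2}^2=2\mathcal H_0(\psi)$ and $\|\psi\|_{L^4}^4=4(\mathcal H(\psi)-\mathcal H_0(\psi))$, the right-hand side equals $\|\sigma\|_{L^\infty}(4\mathcal H(\psi)-2\mathcal H_0(\psi))\le 4\|\sigma\|_{L^\infty}\mathcal H(\psi)$, and Gronwall's inequality gives $\mathcal H(\psi(t))\le\mathcal H(\psi_0)\,e^{4\|\sigma\|_{L^\infty}t}$. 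Combined with $\mathcal H(\psi)\ge\mathcal H_0(\psi)=\tfrac12(\|\nabla\psi\|_{L^2}^2+\|x\psi\|_{L^2}^2)$ and Corollary~\ref{4.5}, this bounds $\|\psi(t)\|_\Sigma$ on $[0,T]$ for every $T<\infty$; by the blow-up alternative $T_\Sigma=\infty$, which yields the asserted global solution with $\psi,\nabla\psi,x\psi\in C([0,\infty);L^2)\cap L^4_{\mathrm{loc}}([0,\infty);L^4)$. One could also bypass \eqref{ddt hamil}: the global $L^2$-solution already satisfies $\int_0^T\|\psi(t)\|_{L^4}^4\,dt<\infty$ for all $T<\infty$ by Corollary~\ref{4.5}, so $[0,T]$ can be split into finitely many subintervals on which $\|\psi\|_{L^4(\cdot\times\mathbb R^2)}$ is small, and the $\Sigma$-bound propagated subinterval by subinterval using the estimates of the first step.

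The one step I expect to be a genuine obstacle, rather than routine bookkeeping, is the $\Sigma$-level Strichartz theory, that is, controlling the commutators of $\nabla$ and multiplication by $x$ with the harmonic-oscillator propagator $U(t)=e^{-it(-\Delta+|x|^2)}$: this is where the harmonic potential really enters, and it is handled by conjugation as in \cite{carles}, the underlying symplectic flow being linear because the symbol is quadratic. Everything else — the contraction in $Y_T$, the nonlinear estimates, and the globalization — is the standard subcritical-NLS persistence-of-regularity scheme, with the hypothesis $\sigma\in L^4$ inherited from Theorem~\ref{th4.1} and $\sigma\in W^{1,\infty}$ used precisely for the $\nabla(\sigma\psi)$ and $x(\sigma\psi)$ bounds.
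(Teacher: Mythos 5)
The paper states Corollary \ref{4.6} without any proof, so there is nothing to compare line by line; your proposal supplies exactly the argument the authors presumably had in mind (the standard persistence-of-regularity scheme for NLS with harmonic potential, as in Carles and Cazenave), and it is correct. The two genuinely non-routine points are handled properly: the $\Sigma$-level Strichartz theory via the Heisenberg-picture identities $U(-t)\nabla U(t)$, $U(-t)xU(t)=$ fixed bounded linear combinations of $\nabla$ and $x$ (valid because the symbol of $-\Delta+|x|^2$ is quadratic), and the use of $\sigma\in W^{1,\infty}$ precisely to control $\nabla(\sigma\psi)$ and $x(\sigma\psi)$. Your second globalization route (splitting $[0,T]$ into finitely many intervals where $\|\psi\|_{L^4(I_j\times\mathbb R^2)}$ is small, using Corollary \ref{4.5}, and propagating the $\Sigma$-bound interval by interval) is the cleaner of the two and matches the spirit of the blow-up criterion in Theorem \ref{th4.1}.

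One caveat on your first globalization route: identity \eqref{ddt hamil} as printed in the paper is incomplete for non-constant $\sigma$ — integrating by parts in $\mathrm{Re}\int(\sigma-\alpha|\psi|^2)(-\Delta\bar\psi)\psi$ also produces the term $\int\nabla\sigma\cdot\mathrm{Re}(\psi\nabla\bar\psi)\,dx$, which is absent from \eqref{ddt hamil}. This does not break your Gronwall argument, since under the hypothesis $\sigma\in W^{1,\infty}$ that extra term is bounded by $\|\nabla\sigma\|_{L^\infty}\|\psi\|_{L^2}\|\nabla\psi\|_{L^2}\lesssim \mathcal M(\psi)+\mathcal H(\psi)$ and is absorbed into the Gronwall factor (using the exponential $L^2$-bound of Corollary \ref{4.5}); but you should either add that term or simply rely on your subdivision argument, which avoids the energy identity altogether.
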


The analogue of Proposition \ref{prop 4.3} becomes, if we just assume $\sigma
\in L^\infty(\mathbb{R}^2)$ :\\
{\bf Proposition 4.7.} There exists $\delta > 0$ such that if
$\psi_0 \in L^2(\mathbb{R}^2)$ and $T \in (0,1]$ are such that
$$\|U(.)\psi_0\|_{L^4([0,T]\times \mathbb{R}^2)} \leq \delta \quad \mbox{ and }\quad
T\|\sigma\|_{L^\infty}\|\psi_0\|_{L^2} \leq \frac{\delta}{8}\;,$$
then (4.1) has a unique solution
$$\psi \in C([0,T] ; L^2) \cap L^4([0,T] \times \mathbb{R}^2).$$

The proof is  similar to the proof of Proposition \ref{prop 4.3}, by
working in
$$Y = \{\psi \in C([0,T] ; L^2)\cap L^4([0,T] \times \mathbb{R}^2), \;
\|\psi\|_{L^4([0,T] \times \mathbb{R}^2)} \leq 2\delta,
\|\psi\|_{L^\infty([0,T];L^2)} \leq 2\|\psi_0\|_{L^2}\}.$$ and
estimating
$$\|\sigma \psi\|_{L^1([0,T];L^2)} \leq T\|\sigma\|_{L^\infty([0,T];L^2)}.$$
Now, we still have (5.1), hence
$$\|\psi(t)\|^2_{L^2} \leq \|\psi_0\|^2_{L^2}
e^{t\|\sigma\|_{L^\infty}}\quad \mbox{ and } \quad \int^T_0
\|\psi(t)\|^4_{L^4}dt \leq \frac{e^T\|\sigma\|_{L^\infty}}{\alpha}\;
\|\psi_0\|^2_{L^2} .$$ Therefore, the solution is global again.


\section{Appendix: Proof of Proposition \ref{kernal L-}}

\begin{proof}
We start by proving (i). Consider the minimizing problem
	
	$$
	\ell_{\mu_0}:=\inf\{<L_-v;v>_{\Sigma^*,\Sigma}:\; u\in\Sigma\quad\mbox{and}\quad \|v\|_{L^2}=1\},
	$$
and observe that since $L_-(Q_0)=0$, then $\ell_{\mu_0}\leq0$.\\
On the one hand, arguing as in the proof of proposition \ref{exis Q0}, one can easily show that a minimizer $u$ of the above problem exists. Next, for any test function $\varphi$, we have
	$$
	<L_-(u+\varepsilon\varphi);(u+\varepsilon\varphi)>_{\Sigma^*,\Sigma} \geq \ell_{\mu_0}(1+\varepsilon^2\|\varphi\|_{L^2}^2+2\varepsilon Re(u,\varphi)_2).
	$$
That is
	
	\begin{eqnarray*}
		\ell_{\mu_0}&+&\varepsilon^2\left(  \|\nabla\varphi\|_{L^2}^2+\|x\varphi\|_{L^2}^2+\|Q_0\varphi\|_{L^2}^2-\mu_0\|\varphi\|_{L^2}^2   \right)\\
		&+& 2\varepsilon Re\left( (\nabla u,\nabla\varphi)_2+(x u,x\varphi)_2+(Q_0 u,Q_0\varphi)_2-
		\mu_0 (u,\varphi)_2   \right)\\
		&\geq&  \ell_{\mu_0}(1+\varepsilon^2\|\varphi\|_{L^2}^2+2\varepsilon Re(u,\varphi)_2).
	\end{eqnarray*}
Since $\varepsilon$ is arbitrary and can have any sign, then we deduce that
	$$
	L_-u=\ell_{\mu_0}u,
	$$
and therefore $u$ is an eigenvector of $L_-$ corresponding to the first  (and simple) eigenvalue $\ell_{\mu_0}$.
On the other hand, it follows from Theorem 11.8 in \cite{Lieb-Loss}
	that the minimizer is unique and up to a phase change, we can take a positive minimizer $\tilde u=e^{i\theta}u$ with $\theta\in\mathbb R$. Now, to conclude it suffices to show that $\ell_{\mu_0}=0$. We see from $L_-Q_0=0$ that
	$$
	\ell_{\mu_0}(\tilde u,Q_0)_2=<L_-\tilde u;Q_0>_{\Sigma^*,\Sigma}=<\tilde u;L_-Q_0>_{\Sigma^*,\Sigma}=0,
	$$
yielding (given that $Q_0$ and $\tilde u$ are positive) $\ell_{\mu_0}=0$. This finishes the proof.\\
Now we prove (ii). The proof of the bijectivity goes through several steps. First we prove Proposition \ref{GS via bif}.  Set

\begin{eqnarray}
\nonumber  \mu&=& 2+\eta \\
\label{5.0}  u &=& \sqrt{\eta} q, \mbox{ where } 0< \eta<<1.
\end{eqnarray}

Then $(\mu_0\!\!-\!\!{\rm SP}_0)$ reads as

$$(-\Delta + |x|^2 -2) q = \eta(q-q^3).$$
Now we can decompose $q= a\varphi_1 + \varphi_1^\perp$, where $\varphi_1$ is the first simple eigenfunction of the operator  $H_0:= -\Delta+|x|^2$,   $ \varphi_1^\perp$ denotes an element of the vector space $L^2$-orthogonal  to $\varphi_1$, and $a(\eta)$ is a scalar.
Therefore, we have

\begin{eqnarray}
\nonumber L_2\varphi_1^\perp:=(-\Delta + |x|^2 -2) \varphi_1^\perp&=& \eta\Big[a \varphi_1+\varphi^\perp -(a \varphi_1+ \varphi_1^\perp)^3\Big] \\
\label{5.1}   &=& \eta F(a, \varphi_1^\perp).
\end{eqnarray}
Now let us  define the projection $\Pi$ by: $\Pi(c\varphi_1+ \varphi_1^\perp)= \varphi_1^\perp.$ Thus, \eqref{5.1} can be rewritten in the following way

\begin{equation}\label{5.2}
    \varphi_1^\perp = \eta (\Pi L_2)^{-1} \Pi F(a, \varphi_1^\perp)
\end{equation}
and
\begin{equation}\label{5.3}
    (I-\Pi) \: F(a, \varphi_1^\perp)= 0.
\end{equation}
\noindent We will first solve \eqref{5.2} using the implicit function theorem.\\
First, we notice that for $\eta=0$, and $a_0$ satisfying:
$$
|\varphi_1|_2^2 -a_0^2 |\varphi_1|_4^4=0
$$
,we have  $\varphi_1^\perp(0, a_0)= 0$ is a solution of \eqref{5.2}. On the other hand,
$$
\displaystyle \frac{d}{d \varphi_1^\perp}|_{(\eta=0, a_0)}\Big(  \varphi_1^\perp - \eta (\Pi L_2)^{-1} \Pi F(a, \varphi_1^\perp)   \Big)
$$
is invertible for $0<|(\eta, a-a_0)| << 1$. Thus,  using the implicit  function theorem, there exists a unique $\varphi_1^\perp= \gamma(\eta, a)$ solving \eqref{5.2}.
Now we are going to solve \eqref{5.3} for $a= a(\eta)$.\\
We have $(F(a, \varphi_1^\perp), \varphi_1)_2=0$, which after expansion, leads to
$$
a|\varphi_1|_2^2 -a^3 |\varphi_1|_4^4 + O(\eta)=0,
$$
that in partcicular yields
\begin{equation}\label{5.4}
    -|\varphi_1|_2^2 -3a^2 |\varphi_1|_4^4 <0,
\end{equation}
by an appropriate choice of $|\eta|<<1$.\\
In summary, we can assert that there exists $0<\eta_0<<1$ such that for all $0< \eta<\eta_0$, there exist a unique solution $( \varphi_1^\perp, a)= (\varphi_1^\perp,a)(\eta))$ solving \eqref{5.2}-\eqref{5.3}.
finishing the proof of Proposition \ref{GS via bif}.\\
Second, we consider the eigenvalue problem of the linearized operator around $u(\eta)$

\begin{equation}\label{5.5}
    L_+ \varphi= \big(-\Delta + |x|^2 -3 (u(\eta))^2- \mu(\eta)\big) \varphi= \lambda \varphi,
\end{equation}
and track how the zero eigenvalue of $L_+$ moves for small values of $\eta$. 
Note that since $q$ (and hence $u$) decays exponentially fast in space, the operator $L_+$ has a discrete spectrum with the same asymptotic of the eigenvalues as the Harmonic oscillator.   Recalling that $\mu= \eta+2$ and $u(\eta)= \sqrt{\eta} q$ with $q=a \varphi_1+ \varphi_1^\perp$,  equation \eqref{5.5} is rewritten as
$$
L_\eta \varphi(\eta)= \lambda(\eta)  \varphi(\eta)
$$
with  $\| \varphi(\eta)\|_2^2=1$, and $L_\eta:= -\Delta + |x|^2 -3\eta q^2
-\eta-2$.\\
When $\eta=0$, we have $\lambda(0)=0$, $\varphi(0)=\varphi_1$, and $\lambda(0)$ is a simple isolated eigenvalue of $L_0$.\\
Taking the derivative with respect to $\eta$, we obtain at $\eta=0$:

\begin{equation}\label{5.6}
     \Big(-\Delta + |x|^2-2 -3( a(0)\varphi_1)^2-1\Big)+ L(0) \frac{d\varphi(0)}{d \eta}= \frac{d \lambda}{ d \eta} \varphi(0) + \lambda\frac{d  \varphi(0)}{ d \eta}.
\end{equation}
Multiplying the identity \eqref{5.6} by $\varphi(0)$ and taking the $L^2$ scalar product, we get

\begin{equation}\label{5.7}
\Big((-\Delta + |x|^2-2 -3( a(0)\varphi_1)^2-1)\varphi_1, \varphi_1\Big)_2 =-3 a^2(0)|\varphi_1|_4^4- |\varphi_1|_2^2.
\end{equation}
\eqref{5.7} is strictly negative by identity \eqref{5.4}. Therefore, $\displaystyle \frac{d \lambda}{d \eta}\Big|_{\eta=0}<0.$\\

Third, we show that for small masses $M$, the ground state $Q_M$ minimizing $V_M$  given by Lemma \ref{lemma 2} is indeed equal to the unique $u(\eta)$ given in proposition \eqref{GS via bif} by choosing $M\sim\eta<<\eta_0$. More precisely, it is sufficient to show that $\displaystyle \frac{Q_M}{\sqrt{M}}\rightarrow \varphi_1$ and $\mu_M\rightarrow 2$, as $M\rightarrow 0$ where $(-\Delta + |x|^2)\varphi_1= 2\varphi_1$ and $\|\varphi_1\|_2=1$. \\
To prove the latter assertion, let us first notice that

\begin{eqnarray}
\nonumber 2 M &\leq& \|\nabla Q_M\|_2^2 + \|x Q_M\|_2^2+\|Q_M\|_4^4 \\
\label{5.8}   &\leq& 2M +M^2 \|\varphi_1\|_4^4.
\end{eqnarray}
This implies that $\mu_M\rightarrow 2$ as $M\rightarrow 0$.\\
Additionally, \eqref{5.8} implies that $\displaystyle \frac{Q_M}{\sqrt{M}}=v_M$ is bounded in $\Sigma$ and satisfies

$$-\Delta v_M + |x|^2 v_M + M |v_M|^2 v_M= \mu_M v_M.$$Taking the weak limit in the latter inequality, we deduce that $v_M\rightarrow\varphi_1$ in $\Sigma$ as $M\rightarrow 0$. Hence, choosing $M$ small enough; $M\sim\eta<<M_0:=\eta_0$ so that $\frac{Q_M}{\sqrt M}$ is in a neighborhood of $\varphi_1$ in $\Sigma$.\\
In the last part of the proof, we choose
$$\alpha=  \frac{\displaystyle\int\sigma(x) Q_{M}^4(x) dx}{\displaystyle\int Q_{M}^2(x) dx}>\alpha_0:=  \frac{\displaystyle\int\sigma(x) Q_{M_0}^4(x) dx}{\displaystyle\int Q_{M_0}^2(x) dx} \sim \displaystyle\frac{1}{M_0}
$$ for $M<M_0$ yielding that $Q_M$ is a zero of the functional $\mathcal K$.\\\\

%
%
%
%
%
%
%
%
%

\end{proof}

\smallskip

\noindent \textbf{Acknowledgement: } S.I. was supported by  NSERC grant  (371637-2014). N. M  was partially supported by NSF grant    DMS-1716466.  The authors  would like to thank Peter Markowich for proposing them this problem, and are grateful to staff of King Abdullah University of Science and Technology for their great hospitality.

\bibliographystyle{jplain}

\Addresses

\end{document}